\numberwithin{equation}{section} %% Comment out for sequentially-numbered
\numberwithin{figure}{section} %% Comment out for sequentially-numbered
\theoremstyle{plain}
\newtheorem*{thm*}{Theorem}
\theoremstyle{plain}
\newtheorem{thm}{Theorem}[section]
\theoremstyle{definition}
\newtheorem{defn}[thm]{Definition}
\theoremstyle{plain}
\theoremstyle{plain}
\newtheorem{prop}[thm]{Proposition}
\theoremstyle{plain}
\newtheorem{cor}[thm]{Corollary}
\theoremstyle{remark}
\theoremstyle{remark}
\newtheorem*{acknowledgement*}{Acknowledgement}
\begin{document}

\title[Generalized Helmholtz conditions]{Generalized Helmholtz
  conditions for non-conservative Lagrangian systems}

\author[Bucataru]{Ioan Bucataru} 
\address{Ioan Bucataru, Faculty of  Mathematics, Alexandru Ioan Cuza
  University \\ Ia\c si,  Romania} 
\urladdr{http://www.math.uaic.ro/\textasciitilde{}bucataru/}

\author[Constantinescu]{Oana Constantinescu} 
\address{Oana Constantinescu, Faculty of  Mathematics, Alexandru Ioan Cuza
  University \\ Ia\c si,  Romania} 
\urladdr{http://www.math.uaic.ro/\textasciitilde{}oanacon/}

\date{\today}

\begin{abstract}
In this paper we provide generalized Helmholtz conditions, in terms of a semi-basic
$1$-form, which characterize when a given system of second order ordinary
differential equations is equivalent to the Lagrange equations, for
some given arbitrary non-conservative forces. For the particular cases of dissipative or gyroscopic forces, these conditions, when expressed in terms of a multiplier matrix, reduce to
those obtained in \cite{MSC11}. When the
involved geometric structures are homogeneous with respect to the
fibre coordinates, we show how one can
further simplify the generalized Helmholtz conditions. We provide
examples where the proposed generalized Helmholtz conditions,
expressed in terms of a semi-basic $1$-form, can be integrated
and the corresponding Lagrangian and Lagrange equations can be found.
\end{abstract}

\subjclass[2000]{70H03; 70F17; 49N45; 34A26}

\keywords{generalized Helmholtz conditions, Lagrangian systems,
  non-conservative forces}

\maketitle

\section{Introduction}

The classic inverse problem of Lagrangian mechanics requires to find
the necessary and sufficient conditions, which are called
\emph{Helmholtz conditions}, such that a given system of second order
ordinary differential equations (SODE) is equivalent to the
Euler-Lagrange equations of some regular Lagrangian function. The
problem has a long history and the literature about the subject is vast.
There are various approaches to this problem, using different techniques and
mathematical tools, \cite{AT92, BD09,  CM91, Crampin81, CSMBP94, Krupkova97, MFVMR90, Sarlet82}.

In this work we discuss the inverse problem of Lagrangian systems with
non-conservative forces. Locally, the problem can be formulated as
follows. We consider a SODE in normal form
\begin{eqnarray}
\frac{d^2x^i}{dt^2} + 2G^i\left(x, \dot{x}\right)=0 \label{sode} \end{eqnarray}   
and an arbitrary covariant force field $\sigma_i(x, \dot{x})dx^i$. We 
provide necessary and sufficient conditions, which we call
\emph{generalized Helmholtz conditions}, for the existence of a Lagrangian $L$, such that the system \eqref{sode} is
equivalent to the Lagrange equations
\begin{eqnarray}
\frac{d}{dt}\left(\frac{\partial L}{\partial \dot{x}^i}\right) -
\frac{\partial L}{\partial x^i} = \sigma_i(x,\dot{x}). \label{lagrange_eq} 
\end{eqnarray} 
When the covariant forces are of dissipative or gyroscopic type, the
problem has been studied recently in \cite{CMS10, MSC11}. In these two
papers the authors provide generalized Helmholtz conditions, in terms of a multiplier matrix, for a SODE
\eqref{sode} to represent Lagrange equations with
non-conservative forces of dissipative or gyroscopic type.

The structure of the paper is as follows. In Section \ref{section_gf}
we use the Fr\"olicher-Nijenhuis formalism \cite{FN56, GM00}
to provide a geometric framework associated to a given system
\eqref{sode}. This framework includes: a nonlinear connection,
dynamical covariant derivative and curvature type tensors. In Section \ref{section_ghc} we use this
geometric setting to reformulate the inverse problem of Lagrangian
systems with non-conservative forces. The main contribution of this paper is to provide, in
Theorem \ref{thm2}, generalized Helmholtz conditions in terms of
semi-basic $1$-forms for the most general case of the problem. Such semi-basic $1$-form will represent the
Poincar\'e-Cartan $1$-form of the sought after Lagrangian. In the particular case when the covariant force field $\sigma$ is zero, the
generalized Helmholtz conditions $(GH_1) - (GH_3)$ of Theorem \ref{thm2}
reduce to, and simplify, the Helmholtz obtained in \cite[Theorem 4.1]{BD09}. 

In the next two sections we show that the proposed generalized Helmholtz
conditions $(GH_1) - (GH_3)$, of Theorem \ref{thm2}, reduce to
those obtained in \cite{CMS10, MSC11},
which were expressed in terms of a multiplier matrix, for the
particular case of dissipative or gyroscopic forces. Theorem
\ref{thm3} provides three equivalent sets of conditions, in terms of
semi-basic $1$-forms, for a SODE to be of dissipative type. One 
advantage of formulating the generalized Helmholtz conditions in terms
of forms is discussed in Proposition \ref{cor2}, where we study the formal
integrability of such conditions. An important consequence of
Proposition \ref{cor2} is that any SODE on a $2$-dimensional manifold is
of dissipative type. Theorem \ref{thm4} provides two equivalent sets of
generalized Helmholtz conditions, in terms of semi-basic $1$-forms,
which characterize Lagrangian systems of gyroscopic type. 

In section \ref{section_hghc} we discuss the inverse problem of
Lagrangian systems with non-conservative forces, when all the involved geometric objects
are homogeneous with respect to the velocity coordinates. Within this context, in Theorem \ref{thm5}, we
prove that one generalized Helmholtz condition is a consequence
of the other two. When the covariant force field is zero, depending on the degree of
homogeneity, the problem reduces to the Finsler metrizability problem
or the projective metrizability problem. 

In the last section we show how the techniques developed throughout
the paper can be used to discuss various examples. For these
examples, the generalized Helmholtz conditions, expressed in terms of
a semi-basic $1$-form, can be
integrated and therefore we can find the corresponding Lagrangian and
Lagrange equations. The examples we analyse consist of
non-variational projectively metrizable sprays that are of dissipative
type and a class of gyroscopic semisprays. For each of the two
examples, the techniques used in the proof of Theorems \ref{thm3} and
\ref{thm4} are very useful for integrating the corresponding
generalized Helmholtz conditions.

\section{The geometric framework} \label{section_gf}

\subsection{A geometric setting for semisprays}

In this section, we use the Fr\"olicher-Nijenhuis formalism
\cite{FN56} to associate a geometric setting to a given system of second order ordinary differential equations, \cite{BCD11, BD09, Grifone72, GM00}.  

For an $n$-dimensional smooth manifold $M$, denote by $TM$ its tangent
bundle. Local coordinates $(x^i)$ on $M$ induce local coordinates $(x^i,
y^i)$ on $TM$. The set smooth functions on $M$ will be denoted by
$C^{\infty}(M)$, while the set of smooth vector fields on $M$
will be denoted by $\mathfrak{X}(M)$. 

Consider $\mathbb{C}\in \mathfrak{X}(TM)$ the Liouville (dilation) vector field and
$J$ the tangent structure (vertical endomorphism). Throughout this
paper we use the summation convention over covariant and
contravariant repeated indices. With this convention, the Liouville
vector field and the tangent structure are locally given by:
\begin{eqnarray*}
\mathbb{C}=y^i\frac{\partial}{\partial y^i}, \quad
J=\frac{\partial}{\partial y^i}\otimes dx^i. \end{eqnarray*}
We consider the regular $n$-dimensional vertical distribution, $VTM: u\in TM \to
V_uTM=\operatorname{Ker} d_u\pi\subset T_uTM$. The forms dual to the
vertical vector fields will play an important role in this work. These
are semi-basic (vector valued) forms on $TM$, with respect to the canonical
projection $\pi$. 

In order to develop a geometric setting, we will make use of the Fr\"olicher-Nijenhuis
formalism. Within this formalism one can identify derivations to
vector valued forms on $TM$, \cite{FN56, GM00}. 

For a vector valued $p$-form $P$ on
$TM$, we denote by $i_P:\Lambda^k(TM)\to \Lambda^{k+p-1}(TM)$ the 
derivation of degree $(p-1)$, given by 
\begin{eqnarray*}
i_P\alpha(X_1,..., X_{k+p-1})= \frac{1}{p!(k-1)!}\sum_{\sigma\in
  S_{k+p-1}} \operatorname{sign}(\sigma)\alpha\left(P(X_{\sigma(1)}, ...,
  X_{\sigma(p)}), X_{\sigma(p+1)}, ..., X_{\sigma(k+p-1)}\right), \end{eqnarray*}
where $S_{k+p-1}$ is the permutation group of $\{1,.., k+p-1\}$. We
denote by $d_P:\Lambda^k(TM)\to \Lambda^{k+p}(TM)$ the 
derivation of degree $p$, given by 
\begin{eqnarray*}
  d_P=[i_P, d]=i_P\circ d - (-1)^{p-1} d\circ i_P. \end{eqnarray*}
For two vector valued forms $K$ and $P$ on $TM$, of degrees $k$ and
$p$, we consider the Fr\"olicher-Nijenhuis bracket $[K, P]$, which is
the vector valued $(k+p)$-form, uniquely determined by
\begin{eqnarray}
d_{[K, P]}=[d_K, d_P]=d_k\circ d_P - (-1)^{kp}d_P \circ
d_K. \label{dkl}\end{eqnarray}
For various commutation formulae, within the Fr\"olicher-Nijenhuis
formalism, we will use the Appendix A of the book \cite{GM00}.
We will use some vector valued forms on $TM$ to associate a
differential calculus on $TM$. Directly from the definition of the tangent
structure $J$ it follows that $[J,J]=0$ and according to formula
\eqref{dkl} it follow that $d_J^2=0$. Therefore, any $d_J$-exact form
is $d_J$-closed and according to a Poincar\'e-type Lemma
\cite{Vaisman73}, any $d_J$-closed form is locally $d_J$-exact. The
derivation $d_J$ corresponds to the operator $\dot{d}$ used in \cite{Klein62}. 

A \emph{semispray}, or a second order vector field, is a globally
defined vector field on $TM$, $S\in \mathfrak{X}(TM)$, that satisfies
$JS=\mathbb{C}$. Locally, it can be expressed as:
\begin{eqnarray}
S=y^i\frac{\partial}{\partial x^i} - 2G^i(x,y)\frac{\partial}{\partial
  y^i}. \label{semispray} \end{eqnarray} 
A curve $c: t\in I \subset \mathbb{R} \to c(t)=(x^i(t))\in M$ is
a \emph{geodesic} of the semispray $S$ if $c': t\in I \subset \mathbb{R} \to c'(t)=(x^i(t), dx^i/dt)\in
TM$, is an integral curve of $S$, which means that it satisfies \eqref{sode}.

A semispray $S$ induces a horizontal and a vertical projector, $h$ and $v$
that are given by, \cite{Grifone72}, 
\begin{eqnarray*}
h=\frac{1}{2}\left(\operatorname{Id} - [S,J]\right), \quad
v=\frac{1}{2}\left(\operatorname{Id} + [S,J]\right). \end{eqnarray*}
Locally, the two projectors $h$ and $v$ can be expressed as
\begin{eqnarray*}
h=\frac{\delta}{\delta x^i}\otimes dx^i, \quad
v=\frac{\partial}{\partial y^i}\otimes \delta y^i, \quad
\frac{\delta}{\delta x^i}=\frac{\partial}{\partial x^i} - N^j_i
\frac{\partial}{\partial y^j}, \quad \delta y^i=dy^i+N^i_j dx^j, \quad
N^i_j=\frac{\partial G^i}{\partial y^j}.
\end{eqnarray*}

The Fr\"olicher-Nijenhuis bracket $[S,h]$ induces
two geometric structures. One is the \emph{almost complex structure},
${\mathbb F}$, and the other one is the \emph{Jacobi endomorphism}, $\Phi$,
\begin{eqnarray}
{\mathbb F}=h\circ[S,h]-J, \quad \Phi=v\circ
[S,h]. \label{fphi} \end{eqnarray}
Locally, the almost complex structure can be expressed as follows
\begin{eqnarray*}
{\mathbb F}=\frac{\delta}{\delta x^i} \otimes \delta y^i -
\frac{\partial}{\partial y^i}\otimes dx^i. \end{eqnarray*}
The Jacobi endomorphism has the following
local expression
\begin{eqnarray}
\Phi=R^i_j\frac{\partial}{\partial y^i} \otimes dx^j, \quad
R^i_j=2\frac{\partial G^i}{\partial x^j} - S(N^i_j)- N^i_r
N^r_j. \label{rij} \end{eqnarray}
The horizontal distribution induced by a semispray is, in general,
non-integrable. The obstruction to its integrability is given by the
curvature tensor
\begin{eqnarray}
R=\frac{1}{2}[h,h]=\frac{1}{2} R^i_{jk}\frac{\partial}{\partial y^i}\otimes
dx^j\wedge dx^k, \quad R^i_{jk}= \frac{\delta
    N^i_j}{\delta x^k} - \frac{\delta N^i_k}{\delta
    x^j}. \label{curvature} \end{eqnarray}
The Jacobi endomorphism $\Phi$ and the curvature tensor $R$ are
closely related by
\begin{eqnarray}
3R=[J,\Phi], \quad 3R^i_{jk} = \frac{\partial R^i_j}{\partial y^k}-
\frac{\partial R^i_k}{\partial y^j}. \label{rphi} \end{eqnarray}
From the above first formula and \eqref{dkl} it follows the
commutation formula
\begin{eqnarray}
[d_J, d_{\Phi}]=3d_R. \label{djphi} \end{eqnarray}

We introduce now the \emph{dynamical covariant derivative}, $\nabla$, associated
to a semispray, following the approach from \cite{BCD11, BD09}. For $f\in C^{\infty}(TM)$ and $X\in {\mathfrak
  X}(TM)$, we define 
\begin{eqnarray}
\nabla f= Sf, \quad \nabla X=h[S,hX] +
v[S,vX]. \label{nabla1} \end{eqnarray}
Using the formulae \eqref{fphi}, we can write the action of $\nabla$ on vector fields as
\begin{eqnarray}
\nabla = h\circ {\mathcal L}_S \circ h + v\circ {\mathcal L}_S \circ v
= {\mathcal L}_S + {\mathbb F}+J - \Phi. \label{nabla2} \end{eqnarray}
Therefore, the action of $\nabla$ on the exterior algebra of $TM$ is
given by 
\begin{eqnarray}
\nabla = {\mathcal L}_S -i_{{\mathbb F}+J -
  \Phi}. \label{nabla3} \end{eqnarray} 
The following commutation formula
can be shown using items iii) and iv) of \cite[Theorem 3.5]{BD09}
\begin{eqnarray}
[d_J, \nabla] = d_h + 2i_R. \label{djn}
\end{eqnarray}
For more properties of the dynamical covariant derivative and
some commutation formulae with other geometric structures, we
refer to \cite[Section 3.2]{BD09}. 
 
\subsection{Lagrange systems and non-conservative covariant forces.}

Consider $L: TM \to\mathbb{R}$ a Lagrangian, which is a smooth function
on $TM$ whose Hessian with respect to the fibre coordinates 
\begin{eqnarray} 
g_{ij}=\frac{\partial^2 L}{\partial y^i\partial y^j} \label{gij} \end{eqnarray}
is nontrivial. We say that $L$ is a \emph{regular Lagrangian} if the
Poincar\'e-Cartan $2$-form $dd_JL$ is a symplectic form on $TM$. Locally,
the regularity condition of a Lagrangian $L$ is equivalent to the fact
that the Hessian \eqref{gij}  of $L$ has maximal rank $n$ on $TM$. 

For an arbitrary semispray $S$ and a Lagrangian $L$, the following $1$-form (called the
\emph{Euler-Lagrange} $1$-form, or \emph{the Lagrange differential} in \cite{Tulczyjew76}) is a semi-basic $1$-form:
\begin{eqnarray}
\label{elform} \delta_SL & =& {\mathcal L_S}d_JL - dL= d_J  {\mathcal
  L_S} L - 2d_hL= \nabla d_JL - d_hL \\ & = &
\left\{S\left(\frac{\partial L}{\partial y^i}\right) - \frac{\partial
    L}{\partial x^i}\right\}dx^i = \left\{\frac{\partial
    S(L)}{\partial y^i} - 2\frac{\delta L}{\delta x^i}\right\}dx^i  = 
\left\{\nabla \left(\frac{\partial L}{\partial
      y^i}\right) - \frac{\delta L}{\delta
    x^i}\right\}dx^i. \nonumber \end{eqnarray}
The inverse problem of Lagrangian mechanics requires, for a given
semispray $S$, to decide wether or not there exists a Lagrangian $L$
with vanishing Lagrange differential, which means
$\delta_SL=0$. In this case we will say that the semispray $S$ is
\emph{Lagrangian}. Locally it means that the solutions of the system \eqref{sode}
are among the solutions of the Euler-Lagrange equations of some Lagrangian
$L$. Necessary and sufficient conditions for the 
existence of such Lagrangian are called Helmholtz conditions and were
expressed in terms of a multiplier matrix \cite{CMS13, CSMBP94, Krupkova97, Sarlet82}, a semi-basic
$1$-form \cite{BD09, CMS13}, or a $2$-form \cite{AT92, Crampin81,
  Klein62, MFVMR90}. 

In this work we study the more general problem, when for a given
semispray $S$ and a semi-basic $1$-form $\sigma$, we ask for the
existence of a Lagrangian $L$, whose Lagrange differential is
$\sigma$. 

\begin{defn} \label{def_lt}
Consider $S$ a semispray and $\sigma\in \Lambda^1(TM)$ a semi-basic
$1$ form. We say that $S$ is of \emph{Lagrangian type with covariant force
  field } $\sigma$ if there exists a (locally defined) Lagrangian $L$
such that $\delta_SL=\sigma$. \end{defn}
The above definition expresses the fact that the solutions of the system \eqref{sode}
are among the solutions of the Lagrange equations \eqref{lagrange_eq}.
If the Lagrangian $L$, which we search for, is regular, then the two systems
\eqref{sode} and \eqref{lagrange_eq} are equivalent. 

There is an important aspect of Definition \ref{def_lt} that we want to
emphasize, if we do not make any requirement about the covariant force
field $\sigma$. For an arbitrary semispray $S$ there is always a
Lagrangian $L$ and a semi-basic $1$-form $\sigma$ such that
$\delta_SL=\sigma$.  This case corresponds to the \emph{semi-variational
  equations} studied in \cite[Section 2]{Rossi14}. In our analysis, we start with a given
semispray $S$  and a given semi-basic $1$-form $\sigma$ on $TM$ and search
for a Lagrangian $L$ such that $\delta_SL=\sigma$. More exactly, for a given semispray $S$
and a semi-basic $1$-form $\sigma$, we provide necessary and
sufficient conditions, which we will call \emph{generalized Helmholtz
  conditions}, for the existence of a semi-basic $1$-form $\theta$
that represents the Poincar\'e-Cartan $1$-form of a Lagrangian $L$
such that $\delta_SL=\sigma$.

\section{Generalized Helmholtz conditions} \label{section_ghc}

In this section we provide necessary and sufficient conditions
for a given semispray $S$ to be of Lagrangian type with a given
covariant force field $\sigma$. These
conditions, which we will refer to as \emph{generalized Helmholtz
  conditions}, will be expressed in terms of a semi-basic $1$-form. We will prove that for some particular cases of the
covariant force field (dissipative and gyroscopic) the generalized
Helmholtz conditions reduce to those obtained in \cite{MSC11} in terms
of a multiplier matrix.  

Throughout this work, we make the following assumption about the
semi-basic $1$-form $\theta$ that will be involved in expressing the
generalized Helmholtz conditions. We say that a semi-basic $1$-form
$\theta=\theta_i(x,y)dx^i$ is \emph{non-trivial} if the matrix
$g_{ij}={\partial \theta_i}/{\partial y^j}$ is non-trivial. If
$\theta=d_JL$ is the Poincar\'e-Cartan $1$-form of some function $L$,
then $\theta$ is non-trivial if and only if $L$ is a Lagrangian.    

\begin{thm} \label{thm1}
Consider $S$ a semispray and $\sigma\in \Lambda^1(TM)$ a semi-basic
$1$-form. The semispray $S$ is of \emph{Lagrangian type with
covariant force field} $\sigma$ if
and only if there exists a non-trivial, semi-basic $1$-form $\theta \in
\Lambda^1(TM)$ such that ${\mathcal L}_S\theta - \sigma$ is a closed
$1$-form on $TM$.
\end{thm}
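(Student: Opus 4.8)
The plan is to prove both implications by unwinding the definition of the Lagrange differential $\delta_S L$ and exploiting the fact that $\delta_S L = \mathcal{L}_S d_J L - dL$, as given in \eqref{elform}.

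\textbf{Necessity.} Suppose $S$ is of Lagrangian type with covariant force field $\sigma$, so there is a (locally defined) Lagrangian $L$ with $\delta_S L = \sigma$. Set $\theta = d_J L$, the Poincar\'e--Cartan $1$-form of $L$. This is semi-basic, since $J$ takes vertical values (indeed $\theta = (\partial L/\partial y^i)\, dx^i$), and it is non-trivial precisely because $L$ is a Lagrangian, i.e.\ its Hessian $g_{ij} = \partial^2 L/\partial y^i \partial y^j = \partial \theta_i/\partial y^j$ is non-trivial. From \eqref{elform} we read off $\mathcal{L}_S \theta - \sigma = \mathcal{L}_S d_J L - \delta_S L = dL$, which is obviously a closed $1$-form. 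So $\theta$ does the job.

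\textbf{Sufficiency.} Conversely, suppose $\theta$ is a non-trivial semi-basic $1$-form with $\mathcal{L}_S \theta - \sigma$ closed. First I would like to know that $\theta$ is itself $d_J$-exact, i.e.\ that $\theta = d_J L$ for some function $L$. For this the natural route is: $\theta$ being semi-basic, $d_J \theta$ is a semi-basic $2$-form; if in fact $d_J \theta = 0$, then the Poincar\'e-type Lemma for $d_J$ quoted in the excerpt (recall $d_J^2 = 0$, so $d_J$-closed implies locally $d_J$-exact) yields $L$ with $\theta = d_J L$, and non-triviality of $\theta$ makes $L$ a Lagrangian. The point where I expect to have to work is justifying $d_J \theta = 0$: the hypothesis only gives that $\mathcal{L}_S \theta - \sigma$ is closed, so I would apply $d$ to get $d\mathcal{L}_S \theta = d\sigma$, and then try to extract a vertical/$d_J$ statement. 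One clean way is to apply $i_J$ or $d_J$ to the relation and use commutation formulae such as $[d_J, \mathcal{L}_S]$ (related to $d_h$ and $i_R$ via \eqref{djn}), together with the fact that $\sigma$ semi-basic forces $d_J \sigma = 0$ on the relevant terms; alternatively one argues that the closedness condition, once $\theta$ is taken modulo $d$-exact forms, is equivalent to $\theta$ solving the classic Helmholtz-type system, and Theorem~4.1 of \cite{BD09} (or its adaptation) supplies the $d_J$-closedness. Granting $\theta = d_J L$ with $L$ a Lagrangian, we then have $\mathcal{L}_S d_J L - \sigma = dF$ for some function $F$, hence by \eqref{elform}, $\delta_S L = \mathcal{L}_S d_J L - dL = \sigma + dF - dL + dF$; more carefully, replacing $L$ by $L - F$ (still a Lagrangian, since $dF$ contributes nothing to the Hessian as $F$ need only be adjusted by a function whose $d_J$ is $0$ — one checks $d_J F = 0$ forces $F$ basic) gives $\delta_S (L) = \sigma$, so $S$ is of Lagrangian type with covariant force field $\sigma$.

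The main obstacle, as indicated, is the step showing that a semi-basic $\theta$ with $\mathcal{L}_S \theta - \sigma$ closed is automatically $d_J$-closed; this is the genuinely geometric input and is where the Fr\"olicher--Nijenhuis commutation identities (in particular the structure of $\mathcal{L}_S$ on semi-basic forms and the role of the curvature term $R$) have to be used, rather than pure bookkeeping. I would organize the write-up so that this lemma is isolated, then the two implications are short formal consequences of \eqref{elform} and the $d_J$-Poincar\'e Lemma.
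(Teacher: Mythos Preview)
Your necessity argument is correct and matches the paper exactly.

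The sufficiency argument, however, has a genuine gap and is organised around the wrong difficulty. You single out ``$d_J\theta=0$'' as the main obstacle and propose to attack it via $[d_J,\mathcal L_S]$, curvature terms, or by invoking the Helmholtz machinery of \cite{BD09}; none of this is carried out, and the suggestion that ``$\sigma$ semi-basic forces $d_J\sigma=0$ on the relevant terms'' is simply false (semi-basic does not imply $d_J$-closed). Even granting $d_J\theta=0$, your endgame is tangled: the Lagrangian $L'$ produced by the $d_J$-Poincar\'e lemma has no a priori relation to the primitive $F$ of the closed form $\mathcal L_S\theta-\sigma$, and your attempt to reconcile them (``replacing $L$ by $L-F$'', the displayed line with two $+dF$'s, the aside about $d_JF=0$ forcing $F$ basic) does not parse into a correct computation.

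The paper's route is much shorter and bypasses all of this. Since $\mathcal L_S\theta-\sigma$ is closed, take a \emph{local primitive}: $\mathcal L_S\theta-\sigma=dL$ for some function $L$. Now apply $i_J$ to this equation. On the right, $i_JdL=d_JL$. On the left, $i_J\sigma=0$ and, by the single commutation rule $i_J\mathcal L_S=\mathcal L_Si_J+i_{[J,S]}$ together with $[J,S]=h-v$ and $i_h\theta=\theta$, $i_v\theta=0$ for semi-basic $\theta$, one gets $i_J\mathcal L_S\theta=\theta$. Hence $\theta=d_JL$ \emph{for the very same} $L$, and substituting back gives $\delta_SL=\mathcal L_Sd_JL-dL=\sigma$ immediately. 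No separate $d_J$-closedness lemma, no Poincar\'e step, no adjustment of the Lagrangian is needed.
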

\begin{proof}
For the direct implication, from Definition \ref{def_lt}, it follows
that there exists a Lagrangian $L$ such that ${\mathcal L}_Sd_JL-dL=\sigma$. We
take $\theta=d_JL$, the Poincar\'e-Cartan $1$-form of $L$. It follows
that  ${\mathcal L}_S\theta - \sigma=dL$, which is exact and hence
it is a closed $1$-form. Since $L$ is a Lagrangian, we have that
the semi-basic $1$-form $\theta$ is non-trivial.

For the converse implication, we assume that there exists $\theta \in
\Lambda^1(TM)$ a non-trivial, semi-basic $1$-form, such that ${\mathcal L}_S\theta
- \sigma$ is a closed $1$-form on $TM$. Therefore, there exists a
(locally defined) function $L$ on $TM$ such that 
\begin{eqnarray}
{\mathcal L}_S\theta - \sigma=dL. \label{lsts} \end{eqnarray} We apply
$i_J$ to both sides of this
formula. In the right hand side we have $i_JdL=d_JL$. We evaluate now
the left hand side. Since
$\theta$ and $\sigma$ are semi-basic $1$-forms, it follows that
$i_J\theta = i_J\sigma=0$. For a vector valued $1$-form $K$ on $TM$,
we use the commutation formula, see
\cite[A.1, page 205]{GM00}, 
\begin{eqnarray}
i_K{\mathcal L}_S={\mathcal L}_Si_K + i_{[K, S]}. \label{ills} 
\end{eqnarray}
If $K=J$, the tangent structure, we use above formula and $[J,S]=h-v$. It follows that
$i_J{\mathcal L}_S\theta = i_{[J, S]}\theta = i_h\theta=\theta$ and
hence $\theta=d_JL$. Therefore, the non-trivial, semi-basic $1$ form $\theta$ is the Poincar\'e-Cartan $1$-form
of $L$, and hence $L$ is a Lagrangian function. We replace this in formula \eqref{lsts} and
obtain that the semispray $S$ is of Lagrangian type with the 
covariant force field $\sigma$.   
\end{proof}
In Theorem \ref{thm1}, if we search for a regular Lagrangian $L$, then the
corresponding semi-basic $1$-form $\theta$ has to be regular as well, in the
following sense. A semi-basic $1$-form $\theta\in \Lambda^1(TM)$ is said to be \emph{regular}
if $d\theta$ is a symplectic $2$-form.  If
$\theta=d_JL$ is the Poincar\'e-Cartan $1$-form of some function $L$,
then the regularity condition of $\theta$ is equivalent to the regularity of the
Lagrangian $L$.    

Next theorem provides necessary and sufficient conditions for the
existence of the semi-basic $1$-form, which was discussed in
Theorem \ref{thm1}, using the differential operators associated to a
given semispray. 

\begin{thm} \label{thm2}
A semispray $S$ is of \emph{Lagrangian type with covariant force field} $\sigma$ if
and only if there exists a non-trivial, semi-basic $1$-form $\theta \in
\Lambda^1(TM)$ such that the following
\emph{generalized Helmholtz conditions} are satisfied:
\begin{itemize}
\item[$(GH_1)$] $d_J\theta=0$;
\item[$(GH_2)$] $d_{\Phi}\theta=\frac{1}{2}\nabla d_J\sigma - d_h\sigma$; 
\item[$(GH_3)$] $\nabla d_v\theta = d_v\sigma - \frac{1}{2}i_{{\mathbb F}+J}d_J\sigma$. 
\end{itemize} 
 \end{thm}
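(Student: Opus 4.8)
\textbf{Proof plan for Theorem \ref{thm2}.}

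The plan is to use Theorem \ref{thm1} as the starting point: the semispray $S$ is of Lagrangian type with covariant force field $\sigma$ if and only if there is a non-trivial semi-basic $1$-form $\theta$ with $\mathcal{L}_S\theta - \sigma$ closed, say $\mathcal{L}_S\theta - \sigma = dL$ with $\theta = d_JL$. The idea is to decompose the single equation $d(\mathcal{L}_S\theta - \sigma) = 0$ into its components relative to the horizontal/vertical bigrading on $TM$ induced by $S$, and to re-express $\mathcal{L}_S$ in terms of the dynamical covariant derivative using \eqref{nabla3}, $\nabla = \mathcal{L}_S - i_{\mathbb{F}+J-\Phi}$. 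So first I would write $\mathcal{L}_S\theta = \nabla\theta + i_{\mathbb{F}+J-\Phi}\theta$ and hence the closedness condition becomes $d(\nabla\theta + i_{\mathbb{F}}\theta + i_J\theta - i_\Phi\theta - \sigma) = 0$; since $\theta$ is semi-basic, $i_J\theta = \theta$ and $i_\Phi\theta$, $i_{\mathbb{F}}\theta$ can be handled with the known local forms of $\Phi$ and $\mathbb{F}$.

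The key mechanism is to test the equation $d(\mathcal{L}_S\theta - \sigma) = 0$ against the three projector-type derivations. First I would apply $i_J$: from the computation already done in the proof of Theorem \ref{thm1}, $i_J\mathcal{L}_S\theta = \theta$, so $d_J$ of both sides, combined with $d_J^2 = 0$ and the commutator $[d_J,\nabla] = d_h + 2i_R$ from \eqref{djn}, will isolate the "$d_J$-component"; pushing this through should produce $GH_1$: $d_J\theta = 0$ (equivalently, that $\theta$ is locally $d_J$-exact, i.e.\ the Poincaré–Cartan form of a function). Granting $GH_1$, so that $\theta = d_JL$ locally, I would then split $d(\mathcal{L}_S\theta - \sigma) = 0$ into the part that pairs $h$ with $h$ and the part that pairs $h$ with $v$ — concretely, apply $i_h$ and $i_v$, or equivalently $d_h$ and $d_v$, using the commutation formulas $[d_J,d_\Phi] = 3d_R$ from \eqref{djphi}, $[d_J,\nabla] = d_h + 2i_R$ from \eqref{djn}, and the Fr\"olicher–Nijenhuis identities from Appendix A of \cite{GM00}. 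The $hh$-part should, after using $GH_1$ to kill the curvature terms $i_R\theta$, collapse to $GH_2$: $d_\Phi\theta = \tfrac{1}{2}\nabla d_J\sigma - d_h\sigma$. The $hv$-part, again after invoking $GH_1$, should collapse to $GH_3$: $\nabla d_v\theta = d_v\sigma - \tfrac{1}{2}i_{\mathbb{F}+J}d_J\sigma$.

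For the converse, I would show that $GH_1$–$GH_3$ together imply that $\mathcal{L}_S\theta - \sigma$ is closed. Using $GH_1$, $\theta = d_JL$ for some locally defined $L$, so it suffices to show $d(\mathcal{L}_S\theta - \sigma) = 0$; I would reconstruct this $2$-form from its $d_J$-, $d_h$-, $d_v$-components (every $2$-form on $TM$ decomposes via the bigrading), verify the $(2,0)$-part vanishes by $GH_1$ and $d_J^2 = 0$, the mixed $(1,1)$-part vanishes by $GH_3$, and the $(0,2)$-part vanishes by $GH_2$, each time running the same commutation identities \eqref{djphi}, \eqref{djn} in reverse. Then Theorem \ref{thm1} gives that $S$ is of Lagrangian type with covariant force field $\sigma$, with Lagrangian the reconstructed $L$.

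The main obstacle I expect is bookkeeping: correctly tracking the numerical coefficients ($\tfrac{1}{2}$, the $3$ in $[d_J,d_\Phi] = 3d_R$, the $2$ in $d_h + 2i_R$) and making sure that every curvature contribution $i_R\theta$ and $d_R\theta$ genuinely drops out once $GH_1$ is imposed — this is where the identity $3R = [J,\Phi]$ and $d_J^2 = 0$ must be used carefully, since $d_R\theta$ and $i_R\theta$ for a general semi-basic $1$-form need not vanish, only for $d_J$-closed ones after the appropriate contraction. A secondary subtlety is the systematic use of \eqref{nabla3} to swap between $\mathcal{L}_S$ and $\nabla$ at the right moments so that the final conditions come out phrased in $\nabla$, $d_h$, $d_\Phi$, $d_v$ rather than in $\mathcal{L}_S$; doing this consistently on both the $\theta$-side and the $\sigma$-side is the delicate part of the derivation.
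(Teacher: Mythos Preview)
Your forward direction is essentially the paper's approach: start from Theorem \ref{thm1} in the form $\mathcal{L}_Sd\theta=d\sigma$, then contract successively with $i_J$ and $i_h$, using \eqref{ills}, \eqref{nabla3} and \eqref{djn}, to peel off $GH_1$, then the auxiliary relation $d_h\theta=\tfrac{1}{2}d_J\sigma$, then $GH_2$ and $GH_3$.

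The converse, however, has a genuine gap. You claim that $GH_1$--$GH_3$ directly force $d(\mathcal{L}_S\theta-\sigma)=0$ by checking the three bigraded components. But if you carry out the decomposition you sketch, using \eqref{nabla3} and $i_{\mathbb{F}+J}d_v\theta=0$ (which follows from $GH_1$), you find
\[
\mathcal{L}_Sd\theta-d\sigma=\bigl(\nabla d_h\theta-d_\Phi\theta-d_h\sigma\bigr)+\bigl(\nabla d_v\theta+i_{\mathbb{F}+J}d_h\theta-d_v\sigma\bigr),
\]
the first bracket being semi-basic and the second of mixed type. Neither bracket is $GH_2$ or $GH_3$ on the nose: both contain $d_h\theta$ itself, and $GH_2$, $GH_3$ are obtained from them only \emph{after} substituting the auxiliary relation $d_h\theta=\tfrac{1}{2}d_J\sigma$. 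This relation is \emph{not} a consequence of $GH_1$--$GH_3$; it holds only up to a closed basic $2$-form. Concretely, the paper shows (by applying $d_J$ to $GH_2$ and to $GH_3$ and using \eqref{djphi}, \eqref{djn}) that $d_h\bigl(d_h\theta-\tfrac{1}{2}d_J\sigma\bigr)=0$ and $d_v\bigl(d_h\theta-\tfrac{1}{2}d_J\sigma\bigr)=0$, whence $d_h\theta-\tfrac{1}{2}d_J\sigma=d\beta$ for some basic $1$-form $\beta$. One must then replace $\theta$ by $\tilde\theta=\theta-\beta$; this new form still satisfies $GH_1$--$GH_3$, is still non-trivial, and now also satisfies $d_h\tilde\theta=\tfrac{1}{2}d_J\sigma$, after which your reconstruction of $\mathcal{L}_Sd\tilde\theta=d\sigma$ goes through. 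Without this correction step the converse fails: a $\theta$ satisfying only $GH_1$--$GH_3$ need not itself satisfy $\mathcal{L}_Sd\theta=d\sigma$. (This is exactly the phenomenon behind Corollary \ref{cor1}, where the ``fourth'' Helmholtz condition $d_h\theta=0$ is not implied outright but only after adjusting $\theta$ by a basic form.)
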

\begin{proof}
We fix a semispray $S$ and  a semi-basic $1$-form $\sigma \in
\Lambda^1(TM)$. According to Theorem \ref{thm1} we have that
$S$ is of Lagrangian type with covariant force field $\sigma$ if
and only if there exists a non-trivial, semi-basic $1$-form $\theta \in
\Lambda^1(TM)$ such that 
\begin{eqnarray}
{\mathcal L}_Sd\theta=d\sigma. \label{lsdts}
\end{eqnarray}
We will prove now that formula \eqref{lsdts} is equivalent to the three
generalized Helmholtz conditions $(GH_1)-(GH_3)$.

For the direct implication, we consider $\theta$ a
non-trivial, semi-basic $1$-form on $TM$ that satisfies formula \eqref{lsdts}. We
apply to both sides of this formula the derivation $i_J$. Using
commutation formula \eqref{ills} for $K=J$ and the fact that
$i_{[J,S]}d\theta = i_{h-v}d\theta = i_{2h-\operatorname{Id}}d\theta =
2i_hd\theta - 2d\theta = 2d_h\theta$, we obtain 
\begin{eqnarray}
{\mathcal L}_Sd_J\theta +2d_h\theta =
d_J\sigma. \label{lsdjt}\end{eqnarray}
We apply again the derivation $i_J$ to both sides of the above
formula and we use that $d_h\theta$ and $d_J\sigma$ are semi-basic
$2$-forms, which implies that $i_Jd_h\theta=0$ and
$i_Jd_J\sigma=0$. Using again the commutation formula for $i_J$ and
${\mathcal L}_S$, it follows that $i_{[J,S]}d_J\theta=0$. Since
  $i_{[J,S]}d_J\theta=i_{h-v}d_J\theta=2d_J\theta$ we obtain that
    $d_J\theta=0$, which is the first generalized Helmholtz condition
    $(GH_1)$. We replace this in formula \eqref{lsdjt} and obtain the formula
\begin{eqnarray} d_h\theta = \frac{1}{2}d_J\sigma. \label{dhtdjs} \end{eqnarray} 
Since $\theta$ is a semi-basic $1$-form, it follows that
$i_h\theta=\theta$ and $i_v\theta=0$. Therefore we have 
\begin{eqnarray*}
d\theta=2d\theta - d\theta = i_{\operatorname{Id}}d\theta - d\theta =
i_hd\theta +i_vd\theta - d\theta = d_h\theta +
d_v\theta. \end{eqnarray*}
The condition $d_J\theta=0$ reads $d_J\theta(X, Y)=d\theta(JX, Y)
+d\theta(X, JY) =0$, for all $X, Y\in \mathfrak{X}(TM)$. Hence, for any two vertical vector fields $V, W$ on $TM$, we have
$d\theta(V, W)=0$.

In order to show that the next two generalized Helmholtz conditions are
satisfied, we will prove first that 
\begin{eqnarray}
i_{{\mathbb F}+J}d_v\theta=0. \label{ifdvt} \end{eqnarray}
Consider $X_1, Y_1 \in {\mathfrak
  X}(TM)$. There exist $X_2, Y_2 \in {\mathfrak
  X}(TM)$ such that $({\mathbb F}+J)(X_1)=hX_2$ and $({\mathbb
  F}+J)(Y_1)=hY_2$. Moreover, if we compose to the left these two
equalities with the tangent structure $J$ and use the fact that
$J\circ {\mathbb F}= v$ and $J\circ h = J$, we obtain $vX_1=JX_2$ and $vY_1=JY_2$. Using
these equalities we have
\begin{eqnarray*}
\left(i_{{\mathbb F}+J}d_v\theta\right)(X_1, Y_1) & = & 
d\theta\left(({\mathbb F}+J)(X_1), vY_1\right) + d\theta\left(vX_1,
  ({\mathbb F}+J)(Y_1)\right) \\ & = &  d\theta\left(hX_2, JY_2\right) + d\theta\left(JX_2,
  hY_2\right) = d_J\theta(X_2, Y_2)=0. \end{eqnarray*}
We apply the derivation $i_h$ to both sides of formula
\eqref{lsdts} and use the commutation rule \eqref{ills} for $K=h$,
which gives
\begin{eqnarray*}
{\mathcal L}_Si_hd\theta + i_{[h,S]}d\theta =
i_hd\sigma. \end{eqnarray*}
We use the fact that $\theta$ and $\sigma$ are semi-basic forms, which
implies that $i_hd\theta=d_h\theta - d\theta$ and
$i_hd\sigma=d_h\sigma - d\sigma$, and formula \eqref{lsdts} again to
obtain
\begin{eqnarray}
{\mathcal L}_Sd_h\theta + i_{[h,S]}d\theta =
d_h\sigma. \label{lsdht}\end{eqnarray}
From the two formulae \eqref{fphi} we have $[h,S]= -{\mathbb F}- J
-\Phi$. Now using formula \eqref{ifdvt} we obtain $i_{[h,S]}d\theta = -i_{{\mathbb F}+J}d\theta
-i_{\Phi}d\theta =  -i_{{\mathbb F}+J}d_h\theta
-d_{\Phi}\theta$. With this formula we go back to \eqref{lsdht},
where we use the fact that ${\mathcal
  L}_Sd_h\theta - i_{{\mathbb F}+J}d_h\theta =\nabla d_h\theta$ and hence
\begin{eqnarray}
\nabla d_h\theta - d_{\Phi}\theta =
d_h\sigma. \label{ndht} \end{eqnarray}
If we make use of formula \eqref{dhtdjs} to
substitute $d_h\theta$, we obtain that the second generalized Helmholtz
condition $(GH_2)$ is true as well. Using formula \eqref{nabla3} we
obtain 
\begin{eqnarray*}
{\mathcal L}_Sd\theta = \nabla d\theta +i_{{\mathbb F}+J}d\theta -
i_{\Phi}d\theta. \label{lsdnd} \end{eqnarray*}
If we replace this in \eqref{lsdts}, we obtain 
\begin{eqnarray*}
\nabla d_h\theta + \nabla d_v\theta + i_{{\mathbb F}+J}d_h\theta -
d_{\Phi}\theta = d_h\sigma + d_v\sigma. \end{eqnarray*}
In the above formula we use \eqref{ndht} and formula \eqref{dhtdjs} and
obtain that the last generalized Helmholtz condition $(GH_3)$ is true
as well.

We will prove now the converse, which means that the three generalized
Helmholtz conditions $(GH_1)-(GH_3)$ imply the condition
\eqref{lsdts}.  We will prove first that the existence of a non-trivial, semi-basic $1$-form
$\theta$ that satisfies the three conditions $(GH_1)-(GH_3)$ implies the
existence of a non-trivial, semi-basic $1$-form $\tilde{\theta}$ that satisfies
$(GH_1)-(GH_3)$ and \eqref{dhtdjs} as well. 

Consider $\theta$ a non-trivial, semi-basic $1$-form that satisfies the 
generalized Helmholtz conditions $(GH_1)-(GH_3)$. We apply the derivation $d_J$ to both sides of formula $(GH_2)$ to obtain
\begin{eqnarray}
d_Jd_{\Phi}\theta = \frac{1}{2}d_J\nabla d_J\sigma -
d_Jd_h\sigma. \label{djgh2} \end{eqnarray}
We evaluate first each of the two sides of the above formula. For the left hand side, using the
commutation formula \eqref{djphi}, as well as the fact that $d_J\theta=0$, we have 
\begin{eqnarray*}
 d_Jd_{\Phi}\theta=d_{[J,\Phi]}\theta = 3d_R\theta
 =3d_hd_h\theta. \end{eqnarray*}
Using the commutation formula \eqref{djn}, the fact that $d_J^2=0$ and the
fact that $i_Rd_J\sigma=0$, we can express the first term of
the right hand side of formula \eqref{djgh2} as
\begin{eqnarray*}
d_J\nabla d_J\sigma = d_hd_J\sigma. \end{eqnarray*}
Since $[J,h]=0$ it follows that $d_hd_J+d_Jd_h=0$. Now, if we replace
everything in both sides of formula \eqref{djgh2} we obtain
$ 3d_hd_h\theta = 3 d_hd_J\sigma/2$, 
which can be further written as 
\begin{eqnarray}
d_h\left(d_h\theta -
  \frac{1}{2}d_J\sigma\right)=0. \label{basic1} \end{eqnarray}
If we apply the derivation $d_J$ to both sides of formula $(GH_3)$ we obtain
\begin{eqnarray}
d_J\nabla d_v\theta = d_Jd_v\sigma - \frac{1}{2}d_Ji_{{\mathbb F}+J}
d_J\sigma. \label{djgh3} \end{eqnarray}
To evaluate the left hand side of formula \eqref{djgh3} we use the
commutation formula \eqref{djn}
\begin{eqnarray*}
d_J\nabla d_v\theta =\nabla d_Jd_v\theta + d_hd_v\theta +
2i_Rd_v\theta. \end{eqnarray*} 
We use the fact that $[J,v]=0$, which implies that
$d_Jd_v\theta+d_vd_J\theta=0$, to obtain that $d_Jd_v\theta=0$. Since
$2R=[h,h]=[h, \textrm{Id}-v]=-[h,v]$ it follows that
\begin{eqnarray*} d_hd_v\theta+d_vd_h\theta = d_{[h,v]}= -2d_R\theta =
  -2i_Rd_v\theta.
\end{eqnarray*}
We use all these calculations to write the left hand side of formula
\eqref{djgh3} as 
\begin{eqnarray*}
d_J\nabla d_v\theta = - d_vd_h\theta. \end{eqnarray*}
Finally, we have to evaluate the right hand side of formula
\eqref{djgh3}. For its second term, we will use the following
commutation formula, \cite[A.1, page 205]{GM00}, for two vector valued
$1$-forms $K$ and $P$
\begin{eqnarray}
i_Kd_P=d_Pi_K+d_{P\circ K}-i_{[K,P]}. \label{ikdl}
\end{eqnarray}
For $P=J$ and $K={\mathbb F}+J$ we have 
\begin{eqnarray*}
i_{{\mathbb F}+J}d_Jd_J\sigma = d_J i_{{\mathbb F}+J}d_J \sigma +
d_{J\circ ({\mathbb F}+J)} d_J\sigma - i_{[{\mathbb F}+J, J]}
d_J\sigma. \end{eqnarray*}
Since $J\circ ({\mathbb F}+J)=v$, $[{\mathbb F}+J, J] = [{\mathbb F},
J] = -R$ and $i_{[{\mathbb F}+J, J]} d_J\sigma = -i_Rd_J\sigma=0$ we obtain 
\begin{eqnarray*}
d_J i_{{\mathbb F}+J}d_J \sigma = -d_vd_J\sigma. \end{eqnarray*}
Using these calculations, we can write the right hand side of formula
\eqref{djgh3} as 
\begin{eqnarray*}
d_Jd_v\sigma - \frac{1}{2}d_Ji_{{\mathbb F}+J}
d_J\sigma = - d_vd_J\sigma + \frac{1}{2} d_vd_J\sigma = -\frac{1}{2}
d_vd_J\sigma  \end{eqnarray*} 
It follows that one can write formula \eqref{djgh3} as
\begin{eqnarray}
d_v\left(d_h\theta - \frac{1}{2}d_J\sigma
\right)=0. \label{basic2} \end{eqnarray}
From the two formulae \eqref{basic1} and \eqref{basic2}, we obtain
\begin{eqnarray} d\left(d_h\theta - \frac{1}{2}d_J\sigma
\right)=0. \label{basic3} \end{eqnarray}
Using the above formula, there exists a locally
defined basic $1$-form $\beta$ such that 
\begin{eqnarray} d_h\theta - \frac{1}{2}d_J\sigma= d\beta. \label{basic4} \end{eqnarray}
The semi-basic $1$-form $\tilde{\theta}=\theta-\beta$ satisfies all
three generalized Helmholtz condition $(GH_1)-(GH_3)$ and formula
\eqref{dhtdjs} as well. Since $\beta$ is a basic $1$-form, we have
that the semi-basic $1$-form $\tilde{\theta}$ is non-trivial if and
only if the semi-basic $1$-form $\theta$ is non-trivial.

For this non-trivial, semi-basic $1$-form $\tilde{\theta}$, we will prove that
formula \eqref{lsdts} is true. Formula \eqref{ifdvt}, which we proved in
the first part of our proof, is still true for $\tilde{\theta}$ since
for this we only need that $\tilde{\theta}$ is a semi-basic $1$-form that
satisfies $d_J\tilde{\theta}=0$. Using formula \eqref{nabla3} we
obtain 
\begin{eqnarray}
{\mathcal L}_Sd\tilde{\theta} = \nabla d_h\tilde{\theta}+ \nabla
d_v\tilde{\theta} + i_{{\mathbb F}+J}d_h\tilde{\theta} -
d_{\Phi}\tilde{\theta}. \label{lsdtext}\end{eqnarray} 
In the right hand side of formula \eqref{lsdtext} we replace $d_h\tilde{\theta}$,
$\nabla d_v\tilde{\theta}$ and $d_{\Phi}\tilde{\theta}$ in terms of
$\sigma$, from \eqref{dhtdjs} and the conditions $(GH_2)-(GH_3)$. It follows that formula \eqref{lsdts} is true. 
\end{proof}

In the absence of the exterior force, which means that $\sigma=0$, the
generalized Helmholtz conditions of Theorem \ref{thm2} reduce to
the Helmholtz conditions in \cite[Theorem 4.1]{BD09}. 
\begin{cor} \label{cor1}
A semispray $S$ is Lagrangian if and only if there exists a
non-trivial, semi-basic $1$-form $\theta$ that satisfies the following Helmholtz conditions
\begin{itemize}
\item[$(H_1)$] $d_J\theta=0$;
\item[$(H_2)$] $d_{\Phi}\theta=0$;
\item[$(H_3)$] $\nabla d_v\theta=0$.
\end{itemize}
\end{cor}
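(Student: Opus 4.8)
The plan is to derive Corollary~\ref{cor1} as the special case $\sigma = 0$ of Theorem~\ref{thm2}. The first step is to observe that when $\sigma = 0$, the right-hand sides of conditions $GH_2$ and $GH_3$ vanish identically: $\frac{1}{2}\nabla d_J\sigma - d_h\sigma = 0$ and $d_v\sigma - \frac{1}{2}i_{\mathbb{F}+J}d_J\sigma = 0$. So $GH_1$--$GH_3$ become precisely $d_J\theta = 0$, $d_\Phi\theta = 0$, $\nabla d_v\theta = 0$, which are exactly $H_1$--$H_3$. Thus the statement follows immediately from Theorem~\ref{thm2} once we recall that, in Definition~\ref{def_lt}, a semispray $S$ of Lagrangian type with covariant force field $\sigma = 0$ is the same thing as a Lagrangian semispray (i.e.\ there is a Lagrangian $L$ with $\delta_S L = 0$). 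No further computation is needed beyond invoking the theorem and setting $\sigma = 0$.

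There is, however, a second point worth addressing in the proof, in the spirit of the discussion with Willy Sarlet mentioned in the text: the corollary should be presented as an improvement of \cite[Theorem 4.1]{BD09}, whose four conditions included a redundant one. So after the quick deduction above I would add a remark (or incorporate into the proof) identifying which condition of the original four-condition list is the one now shown to follow from the others. The natural candidate is a condition of the form $d_h\theta = 0$ or, equivalently in that reference, $\nabla d_h\theta = 0$: indeed, inside the proof of Theorem~\ref{thm2} we already derived formula \eqref{dhtdjs}, namely $d_h\theta = \frac{1}{2}d_J\sigma$, purely from $GH_1$ together with \eqref{lsdts}; and in the converse direction we showed that $GH_1$--$GH_3$ force the existence of $\tilde\theta = \theta - \beta$ with $d_h\tilde\theta = \frac{1}{2}d_J\sigma$. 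For $\sigma = 0$ this says $d_h\tilde\theta = 0$ is automatic, so it is the redundant condition.

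So the proof I would write has two short paragraphs. First: ``Apply Theorem~\ref{thm2} with $\sigma = 0$. The semi-basic $2$-forms $\frac{1}{2}\nabla d_J\sigma - d_h\sigma$ and $d_v\sigma - \frac{1}{2}i_{\mathbb{F}+J}d_J\sigma$ vanish, so $GH_1$--$GH_3$ reduce to $H_1$--$H_3$, and `$S$ of Lagrangian type with covariant force field $0$' means precisely `$S$ Lagrangian' by Definition~\ref{def_lt}.'' Second, a remark recalling \cite[Theorem 4.1]{BD09} and noting that the additional fourth condition there, of the form $d_h\theta = 0$, is a consequence of $H_1$ alone via the argument producing \eqref{dhtdjs} (with $\sigma = 0$), possibly after the gauge adjustment $\theta \mapsto \theta - \beta$ which does not affect non-triviality since $\beta$ is basic.

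The main obstacle is not mathematical — the deduction from Theorem~\ref{thm2} is immediate — but rather a matter of bookkeeping: one has to match the notation and the precise form of the four conditions in \cite[Theorem 4.1]{BD09} against $H_1$--$H_3$ here, to correctly name the redundant condition and to check that the gauge transformation $\theta \mapsto \theta - \beta$ (needed to pass from a general solution of $H_1$--$H_3$ to one that also satisfies $d_h\theta = 0$) does not disturb non-triviality or any of the remaining conditions. Since $\beta$ is a basic $1$-form, $d_J(\theta - \beta) = d_J\theta$, $d_\Phi$ and $\nabla d_v$ applied to $\beta$ contribute in controlled ways, and $\partial\beta_i/\partial y^j = 0$, so non-triviality of $\tilde\theta$ is equivalent to that of $\theta$; this is exactly the argument already run inside the proof of Theorem~\ref{thm2}, so it transfers verbatim.
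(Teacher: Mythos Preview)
Your proposal is correct and follows the paper's approach exactly: the corollary is obtained as the $\sigma=0$ case of Theorem~\ref{thm2}, and the paper's accompanying remark identifies $d_h\theta=0$ as the redundant fourth condition of \cite[Theorem~4.1]{BD09}, recovered for $\tilde\theta=\theta-\beta$ via the converse-direction argument \eqref{basic1}--\eqref{basic4}. One minor slip to clean up: $d_h\theta=0$ is not ``a consequence of $H_1$ alone via \eqref{dhtdjs}'' --- the forward derivation of \eqref{dhtdjs} uses the full hypothesis \eqref{lsdts}, while the redundancy argument (the gauge adjustment $\theta\mapsto\theta-\beta$) needs all three conditions $H_1$--$H_3$ to reach \eqref{basic4}, exactly as you state correctly elsewhere in your proposal.
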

In \cite[Theorem 4.1]{BD09} there is an extra condition $d_h\theta=0$
that has been used. As we have seen in the proof of the second part of Theorem
\ref{thm2}, it can be shown that the three Helmholtz conditions $(H_1) - (H_3)$
imply that $d_h\theta=d\beta$, for a basic $1$-form
$\beta$. Therefore, the new semi-basic $1$-form $\tilde{\theta}=\theta-\beta$ satisfies the three Helmholtz
conditions $(H_1) - (H_3)$ as well as the fourth condition
$d_h\tilde{\theta}=0$, which was used in \cite[Theorem 4.1]{BD09}.    

We will provide now a local description of the three generalized
Helmholtz conditions $(GH_1) - (GH_3)$. Consider $S$ a semispray, locally given by formula
\eqref{semispray}, and let $\theta=\theta_idx^i$, $\sigma=\sigma_idx^i$
be two semi-basic $1$-forms on $TM$. We have  
\begin{eqnarray*}
d_v\theta & = &  g_{ij}\delta y^j \wedge dx^i, \quad
g_{ij}:=\frac{\partial \theta_i}{\partial y^j}. \\ d_J\theta & = & \frac{1}{2}\left(g_{ij}-g_{ji}\right) dx^j\wedge dx^i, \quad
d_{\Phi}\theta  = \frac{1}{2}\left(g_{ik}R^k_j-g_{jk}R^k_i \right)
dx^j\wedge dx^i. 
\end{eqnarray*}
The generalized Helmholtz conditions $(GH_1) - (GH_3)$ can be expressed locally as
follows
\begin{itemize}
\item[$(LGH_1)$] $g_{ij}=g_{ji}$. Due to the above definition for
  $g_{ij}$, we also  have $\displaystyle\frac{\partial g_{ij}}{\partial y^k} = \frac{\partial g_{ik}}{\partial y^j}$.
\item[$(LGH_2)$] $g_{ik}R^k_j-g_{jk}R^k_i=\displaystyle
  \frac{1}{2}\nabla \left( \frac{\partial
    \sigma_i}{\partial y^j} - \frac{\partial
    \sigma_j}{\partial y^i}\right) - \left( \frac{\delta
    \sigma_i}{\delta x^j} - \frac{\delta 
    \sigma_j}{\delta x^i}\right)$.
\item[$(LGH_3)$] $\nabla g_{ij}=\displaystyle\frac{1}{2}\left( \frac{\partial
    \sigma_i}{\partial y^j} + \frac{\partial
    \sigma_j}{\partial y^i}\right)$.
\end{itemize}
Condition $(LGH_3)$ and the local expression of $d_h\theta=0$ appear also in \cite[Theorem
3.1]{BM07}, as conditions that uniquely fix the nonlinear
connection of a Lagrange space and a non-conservative force. If $\sigma=0$, the
conditions $(LGH_1) - (LGH_3)$ represent the classic Helmholtz
conditions in terms of the multiplier matrix $g_{ij}$.

\section{The dissipative case} \label{section_dghc}

In this section we restrict our results from the previous section to the particular case when the covariant force field is
a $d_J$-closed, semi-basic $1$-form $\sigma$. Such systems were studied
in \cite{RMPS83}, with special attention on a subclass that admits a
Lagrangian description.

\begin{defn} \label{def_dis}
A semispray $S$ is said to be of \emph{dissipative type} if there
exists a Lagrangian $L$ and a $d_J$-closed semi-basic $1$-form
$\sigma$ on $TM$ such
that $\delta_SL=\sigma$.
\end{defn}  
This definition includes the classic \emph{dissipation of Rayleigh
  type}, where $\sigma=d_J{\mathcal D}$, for ${\mathcal D}$
a negative definite, quadratic function in the velocities.   

Next theorem provides three sets of equivalent conditions that
characterize dissipative systems in terms of semi-basic $1$-forms, and
it corresponds to the multiplier matrix characterisations from \cite[Corollary 1,
Theorem 1, Theorem 3]{MSC11}. The techniques we employ in the proof of
Theorem \ref{thm3} can be used to integrate the corresponding
generalized Helmholtz conditions, as it will be shown in the example
of Subsection \ref{section_ex1}.

Two of the three equivalent sets of conditions of Theorem \ref{thm3}, $(D_1)$
and $(D_3)$, do not involve the dissipative force field $\sigma$, while
the other set, $(D_2)$, does. The proof of the theorem shows how
to recover the covariant force field $\sigma$, when it is not given.     
\begin{thm} \label{thm3}
A semispray $S$ is of \emph{dissipative type} if and only if there exist a
non-trivial, semi-basic $1$-form $\theta$ and a $d_J$-closed semi-basic $1$-form
$\sigma$ on $TM$ that satisfy one of the following equivalent sets of
conditions
\begin{itemize}
\item[$(D_1)$] $d_J\theta=0$, $d_h\theta=0$; \vspace{1mm}
\item[$(D_2)$] $d_J\theta=0$, $d_{\Phi}\theta = -d_h\sigma$, $\nabla
  d_v\theta = d_v\sigma$; \vspace{1mm}
\item[$(D_3)$] $d_J\theta=0$, $d_R\theta=0$, $d_hd_v\theta=0$.   
\end{itemize}
\end{thm}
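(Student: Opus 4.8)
The strategy is to prove the cycle of implications $D_1 \Rightarrow D_3 \Rightarrow D_2 \Rightarrow D_1$, together with the fact that $D_2$ (with an appropriate $\sigma$) is equivalent to the statement that $S$ is of dissipative type. The anchor is Theorem \ref{thm2}: a semispray $S$ is of Lagrangian type with covariant force field $\sigma$ iff there exists a non-trivial, semi-basic $1$-form $\theta$ satisfying $GH_1$--$GH_3$. Since in the dissipative case we additionally require $\sigma$ to be $d_J$-closed, the conditions $GH_1$--$GH_3$ simplify: from $d_J\sigma=0$ the right-hand side of $GH_2$ collapses to $-d_h\sigma$ and the right-hand side of $GH_3$ collapses to $d_v\sigma$. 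Thus $S$ is of dissipative type iff there exist $\theta$ (non-trivial, semi-basic) and $\sigma$ ($d_J$-closed, semi-basic) with $d_J\theta=0$, $d_\Phi\theta=-d_h\sigma$, $\nabla d_v\theta=d_v\sigma$ — which is exactly $D_2$. So the content of the theorem reduces to showing $D_1 \Leftrightarrow D_2 \Leftrightarrow D_3$.

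First I would treat $D_1 \Rightarrow D_2$: given $d_J\theta=0$ and $d_h\theta=0$, take $\sigma=0$ (which is trivially $d_J$-closed and semi-basic). Then $d_\Phi\theta = d_R\theta$-type manipulations — actually one needs $d_\Phi\theta=0$, which follows because $d_Jd_h\theta=0$ together with the identity $d_Jd_\Phi\theta=d_{[J,\Phi]}\theta=3d_R\theta=3d_hd_h\theta$ (using \eqref{djphi} and $d_R=d_h^2$ up to the $d_J\theta=0$ reduction already exploited in the proof of Theorem \ref{thm2}) forces $d_\Phi\theta$ to be $d_J$-closed and semi-basic of the right degree, hence, via the same argument as in Theorem \ref{thm2}, $d_\Phi\theta=0$; and $\nabla d_v\theta=0$ follows from $d_h\theta=0$, $d_J\theta=0$ via \eqref{ndht}/\eqref{nabla3}. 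So $D_2$ holds with $\sigma=0$. Conversely, for $D_2 \Rightarrow D_1$, the argument of the second half of Theorem \ref{thm2} shows that from $GH_1$--$GH_3$ (with $d_J\sigma=0$) one produces $\tilde\theta=\theta-\beta$ with $d_h\tilde\theta=\tfrac12 d_J\sigma=0$ and $d_J\tilde\theta=0$, i.e. $D_1$ holds for $\tilde\theta$ — and non-triviality is preserved since $\beta$ is basic.

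For the equivalence with $D_3$: assuming $D_1$, $d_Jd_h\theta=0$ combined with $3d_R\theta=d_Jd_\Phi\theta$ and $d_\Phi\theta=0$ gives $d_R\theta=0$; and $d_hd_v\theta = -d_vd_h\theta - 2d_R\theta = 0$ using $[h,v]=-2R$ as in the proof of Theorem \ref{thm2}. Conversely, from $D_3$ one has $d_J\theta=0$, $d_R\theta=0$, $d_hd_v\theta=0$, and must recover a $d_J$-closed $\sigma$ with $d_\Phi\theta=-d_h\sigma$, $\nabla d_v\theta=d_v\sigma$; the natural choice is to show $d_h\theta$ is $d$-closed (using $d_hd_h\theta=d_R\theta=0$ for the $d_h$-part and $d_vd_h\theta=-d_hd_v\theta-2d_R\theta=0$ for the $d_v$-part), hence $d_h\theta=d\beta$ locally with $\beta$ basic, replace $\theta$ by $\theta-\beta$ to get $d_h\theta=0$, and then $D_1$ holds, looping back.

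The main obstacle I expect is bookkeeping the degree-shifting commutation identities carefully — in particular verifying that $d_\Phi\theta$, $d_hd_v\theta$, and $d_h\theta$ are $d_J$- or $d$-closed semi-basic forms of the precise type to which the Poincaré-type lemma for $d_J$ (and the ordinary Poincaré lemma for $d$) applies, so that one can legitimately write them as $d_J$-exact or $d$-exact and absorb the correction into a redefinition of $\theta$ by a basic $1$-form. All of these steps are close variants of computations already carried out in the proof of Theorem \ref{thm2}, so the work is in reassembling them in the right order rather than in any genuinely new identity; the freedom to choose $\sigma$ (even $\sigma=0$) is what makes the purely-$\theta$ conditions $D_1$ and $D_3$ possible.
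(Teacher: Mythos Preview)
Your overall strategy---anchoring on Theorem~\ref{thm2} to identify $D_2$ with the dissipative condition, and then linking $D_1$, $D_2$, $D_3$---is sound, and your arguments for $D_2 \Rightarrow D_1$ (via the $\tilde\theta = \theta - \beta$ trick from Theorem~\ref{thm2}), for $D_1 \Rightarrow D_3$, and for $D_3 \Rightarrow D_1$ all work and essentially coincide with the paper's.

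However, your argument for $D_1 \Rightarrow D_2$ has a genuine gap. You propose to take $\sigma = 0$ and then deduce $d_\Phi\theta = 0$ and $\nabla d_v\theta = 0$. But that would mean $\theta$ satisfies the classic Helmholtz conditions $H_1$--$H_3$ of Corollary~\ref{cor1}, i.e.\ that $S$ is actually \emph{Lagrangian}. This is strictly stronger than dissipative and is false in general: the paper's own example~\eqref{ex1} is a spray satisfying $D_1$ (for $\theta = 2y^1\,dx^1 + y^2\,dx^2$) which is explicitly non-variational; for that $\theta$ one has $\nabla d_v\theta \neq 0$, so $D_2$ fails with $\sigma = 0$. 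The step ``$d_J d_\Phi\theta = 0$ forces $d_\Phi\theta = 0$ via the same argument as in Theorem~\ref{thm2}'' is invalid: a $d_J$-closed semi-basic $2$-form need not vanish (it is only locally $d_J$-exact), and the argument in Theorem~\ref{thm2} used $d$-closedness, not $d_J$-closedness, to produce the basic correction $\beta$. Likewise, formula~\eqref{ndht} was derived \emph{under} the hypothesis $\mathcal{L}_S d\theta = d\sigma$, so it is not available as a free-standing identity to conclude $\nabla d_v\theta = 0$.

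The paper closes this link differently: it proves $D_1 \Rightarrow (S$ dissipative$)$ directly by \emph{constructing} a nonzero $\sigma$. From $d_J\theta = 0$ one writes $\theta = d_J L$; then $d_h\theta = 0$ reads $d_J d_h L = 0$, so by the $d_J$-Poincar\'e lemma $d_h L = d_J f$ for some local $f$; setting $\mathcal{D} = S(L) - 2f$ and using $\delta_S L = d_J S(L) - 2 d_h L$ from~\eqref{elform} gives $\delta_S L = d_J\mathcal{D}$. Then ``$S$ dissipative $\Rightarrow D_2$'' is immediate from Theorem~\ref{thm2} with $d_J\sigma=0$, exactly as you observed. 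The point you missed is that the $\sigma$ witnessing $D_2$ must be built from $\theta$ and is generically nonzero; the freedom to choose $\sigma$ does not extend to choosing it to vanish.
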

\begin{proof}
We will prove the following implications: $(D_1)$ $\Longrightarrow$ $S$
is dissipative $\Longrightarrow$ $(D_2)$ $\Longrightarrow$ $(D_3)$
$\Longrightarrow$ $(D_1)$. 

For the \emph{first implication}, we assume that there exists a
non-trivial, semi-basic $1$-form
$\theta$ such that $d_J\theta=0$ and $d_h\theta=0$. From the first
condition $(D_1)$, it follows that there exists a Lagrangian $L$, locally
defined on $TM$, such that $\theta=d_JL$. Now, the second condition
$(D_1)$ reads $0=d_hd_JL=-d_Jd_hL$. This means that the semi-basic $1$-form
$d_hL$ is $d_J$-closed and hence it is locally $d_J$-exact. Therefore, there
exists a function $f$, locally defined on $TM$, such that
$d_hL=d_Jf$. Consider now the function ${\mathcal D}=S(L)-2f$. It follows that
$d_JS(L)-2d_hL=d_J{\mathcal D}$. In view of the second expression
\eqref{elform} of $\delta_SL$, it follows that $\delta_SL=d_J{\mathcal
  D}$, which shows that the semispray $S$ is of dissipative type, with
the $d_J$-exact (and hence $d_J$-closed) semi-basic $1$-form
$\sigma=d_J{\mathcal D}$.

For the \emph{second implication}, we assume that the semispray $S$ is of
dissipative type. By definition, it follows that there exist a Lagrangian $L$ and a $d_J$-closed semi-basic $1$-form
$\sigma$ on $TM$ such
that $\delta_SL=\sigma$. Last condition implies that formula
\eqref{lsdts} is true, where $\theta=d_JL$ and $d_J\sigma=0$. Therefore
the three generalized Helmholtz conditions $(GH_1) - (GH_3)$ of Theorem
\ref{thm2} are satisfied. If we use the fact that $d_J\sigma=0$, we
have that the conditions $(GH_1) - (GH_3)$ imply the conditions $(D_2)$.

For the \emph{third implication}, we consider $\theta$ a non-trivial, semi-basic $1$-form and $\sigma$ a  $d_J$-closed semi-basic
$1$-form on $TM$ such that the three conditions $(D_2)$ are satisfied. 

Using formulae \eqref{dkl} and \eqref{rphi}, as well as the first two
conditions $(D_2)$,  it follows 
\begin{eqnarray}
3d_R\theta=d_{[J,\Phi]} \theta = d_Jd_{\Phi}\theta + d_{\Phi} d_J
\theta = - d_Jd_h\sigma = d_hd_J\sigma=0. \label{drt}
\end{eqnarray}
We apply the derivation $d_J$ to both sides of the last
condition $(D_2)$. Since $[J,v]=0$, using formula \eqref{dkl}, it follows that
$d_Jd_v\sigma = -d_vd_J\sigma=0$. Using the commutation formula
\eqref{djn}, we have
\begin{eqnarray*} 
0=d_Jd_v\sigma = d_J\nabla d_v\theta = \nabla d_J d_v\theta +
d_hd_v\theta + 2 i_Rd_v\theta. \end{eqnarray*}
In the above formula we use that $i_Rd_v\theta = i_Rd\theta = d_R\theta=0$ and
$d_Jd_v\theta =0$. It follows that $d_hd_v\theta
=0$. 

For the \emph{last implication}, we show first that $d_h\theta$ is a closed, basic $2$-form.  
The second condition $(D_3)$, $d_R\theta=0$, is equivalent to $d_hd_h\theta=0$.
Using \eqref{dkl}, we have $d_hd_v\theta + d_vd_h\theta =
d_{[h,v]}\theta = -2d_R\theta=0$. In this last formula we use the last
condition $(D_3)$ and obtain $d_vd_h\theta=0$. Since
$d_h\theta$ is a semi-basic $2$-form it follows that
\begin{eqnarray}
dd_h\theta = d_hd_h\theta + d_v d_h\theta=0. \label{ddht}
\end{eqnarray}
Above formula implies that $d_h\theta$ is a closed, basic $2$-form and
hence it is locally exact. Therefore, there exists a
basic $1$-form $\eta$ such that $d_h\theta =d \eta$. We consider now
the semi-basic $1$-form 
\begin{eqnarray} \tilde{\theta} =\theta
  -\eta. \label{te} \end{eqnarray}
Since $\eta$ is a basic $1$-form, we have that the semi-basic
$1$-forms $\tilde{\theta}$ and $\theta$ are simultaneously non-trivial. We have that $d_J \tilde{\theta} =d_J\theta = 0$ and $d_h
\tilde{\theta} =0$, which means that the non-trivial, semi-basic
$1$-form $\tilde{\theta} $ satisfies the two conditions $(D_1)$.
\end{proof}

The three conditions $(D_2)$ are equivalent to the four conditions of
\cite[Theorem 1]{MSC11}. The correspondence between the semi-basic $1$-form
$\theta=\theta_idx^i$ and the $(0,2)$-type
tensor $g=(g_{ij})$ is $g_{ij}={\partial \theta_i}/{\partial
  y^j}$. The three conditions $(D_3)$ are equivalent to the four conditions of 
\cite[Theorem 3]{MSC11}. 

Locally, the three equivalent sets of conditions $(D_1)$, $(D_2)$ and $(D_3)$ can be expressed as follows
\begin{itemize} \item[$(LD_1)$] $\displaystyle \frac{\partial \theta_i}{\partial y^j} =
    \frac{\partial \theta_j}{\partial y^i}$ , $\displaystyle \frac{\delta \theta_i}{\delta x^j} = \frac{\delta
      \theta_j}{\delta x^i}$. \vspace{1mm} \\ 
 \item[$(LD_2)$] $g_{ij}=g_{ji}$,  $\displaystyle  \frac{\partial g_{ij}}{\partial y^k} =
  \frac{\partial g_{ik}}{\partial y^j}$, $\displaystyle g_{ik}R^k_j-g_{jk}R^k_i=\frac{\delta}{\delta
      x^i}\left(\frac{\partial {\mathcal D}}{\partial y^j}\right) - \frac{\delta}{\delta
      x^j}\left(\frac{\partial {\mathcal D}}{\partial y^i}\right)$,
    $\displaystyle \nabla g_{ij}=\frac{\partial^2 {\mathcal D}}{\partial
      y^i\partial y^j}$. \vspace{1mm} \\
\item[$(LD_3)$] $g_{ij}=g_{ji}$,  $\displaystyle  \frac{\partial g_{ij}}{\partial y^k} =
  \frac{\partial g_{ik}}{\partial y^j}$, $g_{il}R^l_{jk} +
  g_{kl}R^l_{ij} + g_{jl}R^l_{ki}=0$, $g_{ij|k}-g_{ik|j}=0$.
\end{itemize} 
In the last set of conditions $(LD_3)$, $g_{ij|k} = {\delta g_{ij}}/{\delta x^k}
  - g_{il}\Gamma^l_{jk} - g_{lj}\Gamma^l_{ik}$, where
  $\Gamma^i_{jk}={\partial N^i_j}/{\partial y^k}$, is the $h$-covariant
  derivative of the tensor $g$ with respect to the Berwald connection.
The conditions $(LD_3)$ represent conditions (45)-(46) in
\cite{MSC11}, while $(LD_2)$ represent conditions (30)-(32) in
\cite{MSC11}. 

For the formal integrability of the system $(D_1)$: $d_J\theta=0$ and
$d_h\theta=0$, we can follow the results of \cite{BM11, Constantinescu12}, where the
formal integrability of very similar systems was investigated. 
Using a very similar proof as the one of Theorems 4.2 and 4.3 in
\cite{BM11} and Theorems 3 and 4 in \cite{Constantinescu12}, we can state
the following result. 
\begin{prop} \label{cor2}
The system $(D_1)$ is formally integrable if the following obstruction
is satisfied for all semi-basic $1$-forms $\theta$ 
\begin{eqnarray} d_R\theta=0. \label{drto} \end{eqnarray}
\end{prop}

An important consequence of the previous proposition appears in dimension
$2$. In this case, the obstruction $d_R\theta=0$ is automatically
satisfied, being a semi-basic $3$-form on a $2$-dimensional
manifold. Therefore, we obtain the following corollary. 
\begin{cor} \label{cor:2dim} Any semispray on a $2$-dimensional
manifold is of dissipative type. \end{cor}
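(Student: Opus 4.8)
The plan is to reduce the statement to Theorem~\ref{thm3} together with Proposition~\ref{cor2}. By the equivalence (established in Theorem~\ref{thm3}) between $D_1$ and $S$ being dissipative, it suffices to exhibit, on an arbitrary $2$-dimensional manifold $M$, a non-trivial semi-basic $1$-form $\theta$ solving the first-order system $D_1$: $d_J\theta=0$ and $d_h\theta=0$. Once such a $\theta$ exists, the first implication in the proof of Theorem~\ref{thm3} produces a Lagrangian $L$ with $\theta=d_JL$ and a $d_J$-exact covariant force field $\sigma=d_J\mathcal{D}$ with $\delta_SL=\sigma$. So the whole question comes down to the solvability of $D_1$ with a non-degenerate solution.

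Next I would invoke Proposition~\ref{cor2}, which identifies the single obstruction to the formal integrability of $D_1$ as the condition $d_R\theta=0$. The decisive observation for $\dim M=2$ is that $d_R\theta$ is a semi-basic $3$-form on $TM$: since $R=\frac{1}{2}[h,h]$ is a vector-valued semi-basic $2$-form and $\theta$ a semi-basic $1$-form, $d_R\theta$ is semi-basic, and locally it is a combination of terms $dx^i\wedge dx^j\wedge dx^k$ with coefficients built from $R^i_{jk}$ and $\partial\theta_i/\partial y^j$. With only two base indices available there is no nonzero alternating expression in three such indices, hence $d_R\theta\equiv 0$ for \emph{every} semi-basic $1$-form $\theta$ when $n=2$. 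Therefore the obstruction of Proposition~\ref{cor2} is automatically satisfied and the system $D_1$ is formally integrable on any $2$-dimensional manifold.

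It remains to promote formal integrability to the existence of an actual local solution, and in fact a non-trivial one. Proceeding as in \cite{BM11, Constantinescu12} for the analogous first-order operators, formal integrability (together with Cartan--K\"ahler in the analytic category) yields local solutions $\theta$ of $D_1$ with enough freedom in the choice of initial data to prescribe the symmetric matrix $g_{ij}=\partial\theta_i/\partial y^j$ to be non-degenerate at a chosen point, hence non-trivial on a neighbourhood. Such a $\theta$ satisfies $D_1$, so Theorem~\ref{thm3} gives that $S$ is of dissipative type. The point I expect to require the most care is precisely this last step: ensuring that the integrability conditions propagated by the prolongation of $D_1$ do not force $g_{ij}$ to degenerate, i.e.\ that non-trivial solutions genuinely exist — this is exactly the content carried over from the formal integrability analysis of \cite{BM11, Constantinescu12}.
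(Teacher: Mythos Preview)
Your proposal is correct and follows essentially the same route as the paper: the paper simply observes that the obstruction $d_R\theta=0$ of Proposition~\ref{cor2} is a semi-basic $3$-form on a $2$-dimensional base and hence vanishes automatically, so $D_1$ is formally integrable and Theorem~\ref{thm3} applies. If anything, you are more explicit than the paper about the final step of extracting a genuinely non-trivial local solution from formal integrability via the analysis in \cite{BM11, Constantinescu12}.
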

The first example of Section 5 in \cite{MSC11}, as well as the example
\eqref{ex1} that we discuss in Section \ref{section_ex} agree with the conclusion of the above corollary.
 
\section{The gyroscopic case} \label{section_gghc}

In this section, we restrict the general results obtained in Section
\ref{section_ghc} to the particular case when the exterior covariant force
field $\sigma$ is of the type $i_S\omega$, for $\omega$ a basic
$2$-form. This corresponds to the gyroscopic case, studied in
\cite{MSC11}. 

\begin{defn} \label{defgyr} A semispray $S$ is said to be of
  \emph{gyroscopic type}
if there exists a Lagrangian $L$ and a basic  $2$-form $\omega$ on
$TM$ such that $\delta_{S}L=i_{S}\omega$. \end{defn}

Next theorem provides two characterisations of gyroscopic systems in
terms of a semi-basic $1$-form, and it corresponds to the multiplier
matrix characterisations from \cite[Theorem 2, Theorem4]{MSC11}.  The techniques we employ in the
proof of Theorem \ref{thm4} can be used when studying the integrability
of the corresponding generalized Helmholtz conditions, as we exemplify
it in Subsection \ref{section_ex2}.

The set of conditions $(G_2)$, of Theorem \ref{thm4},
does not involve the gyroscopic $2$-form $\omega$, while the
other set $(G_1)$ does. The proof of the theorem shows how to recover
this gyroscopic $2$-form $\omega$.

\begin{thm} \label{thm4} A semispray $S$ is of \emph{gyroscopic type} if and only if
there exist a non-trivial, semi-basic $1$-form $\theta$ and a basic $2$-form
$\omega$ on $TM$ that satisfy one of the following equivalent sets
of conditions 
\begin{itemize}
\item [$(G_{1})$] $d_{J}\theta=0$, $d_{\Phi}\theta=i_{S}d\omega$, $\nabla d_{v}\theta=0$;
\vspace{1mm}
\item [$(G_{2})$] $d_{J}\theta=0$, $d_{\Phi}\theta=i_{S}d_{R}\theta$,
$\nabla d_{v}\theta=0$. 
\end{itemize}
\end{thm}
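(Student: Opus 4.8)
The plan is to read off both sets of conditions from Theorem \ref{thm2} specialized to the covariant force field $\sigma=i_{S}\omega$, and then to establish the equivalence $G_{1}\Leftrightarrow G_{2}$ directly. By Definition \ref{defgyr} together with Definition \ref{def_lt}, the semispray $S$ is of gyroscopic type if and only if there is a basic $2$-form $\omega$ such that $S$ is of Lagrangian type with covariant force field $i_{S}\omega$ (note that $i_{S}\omega$ is automatically a semi-basic $1$-form whenever $\omega$ is semi-basic, since $\omega$ kills vertical arguments). So the first task is to compute, for $\sigma=i_{S}\omega$ with $\omega$ basic, the right-hand sides of $GH_{2}$ and $GH_{3}$ of Theorem \ref{thm2}.

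These computations I would carry out with the commutation rule \eqref{ills}, Cartan's formula $\mathcal{L}_{S}=i_{S}d+di_{S}$, the identity \eqref{ikdl}, and the standard facts that $i_{J}$ and $i_{v}$ annihilate semi-basic forms, $i_{h}$ acts as multiplication by $k$ on semi-basic $k$-forms, $[J,S]=h-v$, $[h,S]=-\mathbb{F}-J-\Phi$, $[v,S]=\mathbb{F}+J+\Phi$, and $i_{\Phi}\omega=0$ (as $\Phi$ is vertically valued and $\omega$ is semi-basic). Writing $d(i_{S}\omega)=\mathcal{L}_{S}\omega-i_{S}d\omega$, one obtains successively: $d_{J}(i_{S}\omega)=2\omega$; $d_{v}(i_{S}\omega)=i_{\mathbb{F}+J}\omega$, so that the right-hand side $d_{v}\sigma-\tfrac{1}{2}i_{\mathbb{F}+J}d_{J}\sigma$ of $GH_{3}$ vanishes; and $\nabla\omega-d_{h}(i_{S}\omega)=i_{S}d\omega$, so that the right-hand side $\tfrac{1}{2}\nabla d_{J}\sigma-d_{h}\sigma$ of $GH_{2}$ equals $i_{S}d\omega$. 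Hence, for $\sigma=i_{S}\omega$, the generalized Helmholtz conditions $GH_{1}-GH_{3}$ become precisely $d_{J}\theta=0$, $d_{\Phi}\theta=i_{S}d\omega$, $\nabla d_{v}\theta=0$, that is, the set $G_{1}$. Combined with Theorem \ref{thm2}, this proves that $S$ is of gyroscopic type if and only if there exist a non-trivial, semi-basic $1$-form $\theta$ and a basic $2$-form $\omega$ satisfying $G_{1}$.

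Next I would prove $G_{1}\Leftrightarrow G_{2}$, keeping $\theta$ fixed. For $G_{1}\Rightarrow G_{2}$: applying $d_{J}$ to $d_{\Phi}\theta=i_{S}d\omega$, using \eqref{djphi} together with $d_{J}\theta=0$ on the left-hand side, and $d_{J}(i_{S}d\omega)=3\,d\omega$ (again Cartan's formula and \eqref{ills}, since $d\omega$ is a semi-basic $3$-form) on the right-hand side, yields $d_{R}\theta=d\omega$; therefore $i_{S}d_{R}\theta=i_{S}d\omega=d_{\Phi}\theta$, while the other two conditions of $G_{2}$ coincide with those of $G_{1}$. For $G_{2}\Rightarrow G_{1}$: I would set $\omega:=d_{h}\theta$. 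Applying $d_{J}$ to $\nabla d_{v}\theta=0$ and using \eqref{djn}, $[J,v]=0$ (so $d_{J}d_{v}\theta=-d_{v}d_{J}\theta=0$) and $i_{R}d_{v}\theta=d_{R}\theta$, one gets $d_{h}d_{v}\theta=-2d_{R}\theta$; combining this with $d_{[h,v]}=-2d_{R}$ gives $d_{v}d_{h}\theta=0$. Since $d_{h}d_{h}\theta=d_{R}\theta$, it follows that $d(d_{h}\theta)=d_{R}\theta$, which is semi-basic because $d_{J}\theta=0$ forces $d\theta$ to vanish on pairs of vertical vectors. Hence $\omega=d_{h}\theta$ is a basic $2$-form with $d\omega=d_{R}\theta$, so that $d_{\Phi}\theta=i_{S}d_{R}\theta=i_{S}d\omega$ by the middle condition of $G_{2}$, and the pair $(\theta,\omega)$ satisfies $G_{1}$; non-triviality of $\theta$ is preserved because $\theta$ is unchanged. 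This closes the loop: the existence of a pair satisfying $G_{1}$, the existence of a pair satisfying $G_{2}$, and $S$ being of gyroscopic type are all equivalent.

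The main obstacle is the first step, the specialization of the generalized Helmholtz conditions, and within it the identity $\nabla\omega-d_{h}(i_{S}\omega)=i_{S}d\omega$: one must track the three terms $\mathcal{L}_{S}\omega$, $i_{\mathbb{F}+J}\omega$ and $i_{S}d\omega$ through Cartan's formula and \eqref{ills}, and verify that $\omega$ being merely basic, not closed, leaves a nonzero $i_{S}d\omega$ contribution, which is exactly what lands on the right-hand side of $GH_{2}$ and produces the term $i_{S}d\omega$ in $G_{1}$. The remaining steps are short chains of Fr\"olicher--Nijenhuis commutation identities.
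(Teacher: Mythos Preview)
Your proposal is correct, and the computations you sketch all go through (in particular $d_J(i_S\omega)=2\omega$, $d_v(i_S\omega)=i_{\mathbb{F}+J}\omega$, $\nabla\omega-d_h(i_S\omega)=i_Sd\omega$, and $d_J(i_Sd\omega)=3\,d\omega$ are right). Your route, however, differs from the paper's in two places. First, for ``gyroscopic $\Leftrightarrow G_1$'' you specialize the generalized Helmholtz conditions of Theorem~\ref{thm2} to $\sigma=i_S\omega$; the paper explicitly mentions this option but instead gives a direct argument starting from $\nabla d_JL-d_hL=i_S\omega$ and deriving $d_h\theta=\omega$ via $d_J(i_S\omega)=2\omega$. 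Second, you close the cycle by proving $G_2\Rightarrow G_1$ (setting $\omega=d_h\theta$ and showing it is basic with $d\omega=d_R\theta$), whereas the paper proves $G_2\Rightarrow\text{gyroscopic}$ directly: it builds the Lagrangian $L$ with $\theta=d_JL$, obtains $\delta_SL=i_S\omega+d_Jg$ from the first and third conditions alone, and then uses the middle condition of $G_2$ to show $dd_Jg=0$, hence $\delta_SL=i_S\omega$. Your path is shorter and makes fuller use of Theorem~\ref{thm2}; the paper's path is more self-contained and exhibits explicitly the obstruction $d_Jg$ that the second condition of $G_2$ is needed to kill, clarifying why the first and third conditions alone are insufficient (cf.\ the paper's remark about \cite[Proposition~3]{MSC11}).
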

\begin{proof}
We will prove the following implications: $S$ is gyroscopic $\Longrightarrow$ $(G_1)$ $\Longrightarrow$ $(G_2)$ $\Longrightarrow$ $S$
is gyroscopic. 

For the \emph{first implication}, we assume that $S$ is of gyroscopic type
and therefore there exists a Lagrangian $L$ and a basic $2$-form $\omega$
such that $\delta_{S}L=i_{S}\omega$.  Using third form of $\delta_SL$ in formula \eqref{elform}, we can
write the condition that $S$ is of gyroscopic type as follows
\begin{eqnarray*} \nabla d_JL- d_hL=i_S\omega. \end{eqnarray*}
If we apply the derivation $d_J$ to the both sides of the above formula, use
the commutation formula \eqref{djn}, as well as the commutation
$d_hd_J=-d_Jd_h$, we obtain
\begin{eqnarray} 2d_h d_JL +2i_Rd_JL=d_Ji_S\omega. \label{dhdjis} \end{eqnarray} 
We evaluate the right hand side of \eqref{dhdjis} by using the
commutation formula, \cite[A.1, page 205]{GM00}, 
\begin{eqnarray}
i_Xd_K=-d_Ki_X + {\mathcal L}_{KX} + i_{[K,X]}. \label{ikdx} \end{eqnarray} 
For $K=J$ and $X=S$, this commutation formula reads
\begin{eqnarray}
d_Ji_S\omega = -i_Sd_J\omega + {\mathcal L}_{\mathbb C}\omega
+i_{[J,S]}\omega = 2\omega. \label{djiso} \end{eqnarray}

We consider the non-trivial, semi-basic $1$-form $\theta=d_JL$, and use the fact
that $i_Rd_JL=0$. Then, from formula \eqref{dhdjis} it follows
$d_h\theta=\omega$ and hence ${\mathcal L}_Sd_h\theta = {\mathcal
  L}_S\omega$. The condition $\delta_SL=i_S\omega$ implies 
${\mathcal L}_Sd\theta = di_S\omega$, which can be further written as
\begin{eqnarray*}   {\mathcal L}_Sd_h\theta + {\mathcal L}_Sd_v\theta
  =  {\mathcal L}_S\omega -  i_Sd\omega \Longleftrightarrow {\mathcal L}_Sd_v\theta
 = -  i_Sd\omega. \end{eqnarray*}
In the last formula above, we use \eqref{nabla3}, \eqref{ifdvt} and
$i_{\Phi}d_v\theta = d_{\Phi}\theta$ to obtain
\begin{eqnarray*}
 \nabla d_v\theta -d_{\Phi}\theta
  = -  i_Sd\omega. \end{eqnarray*}
In both sides of this formula we separate the semi-basic $2$-forms
and obtain $d_{\Phi}\theta = i_Sd\omega$, and what remains is $\nabla
d_v\theta =0$. Therefore, we have shown that all three conditions
$(G_1)$ are satisfied. 

For the \emph{second implication}, we apply the
derivation $d_J$ to both sides of the second condition
$(G_1)$. Using the commutation formulae \eqref{djphi} and \eqref{ikdx} it
follows 
\begin{eqnarray*}
3d_R\theta = d_{[J,\Phi]}\theta = d_Jd_{\Phi}\theta = d_Ji_Sd\omega =
i_{[J,S]}d\omega =3d\omega. \end{eqnarray*}
This implies $d_R\theta=d\omega$ and hence the second condition $(G_2)$ is satisfied. 

For the \emph{last implication}, we assume now that there is a
non-trivial, semi-basic $1$-form $\theta$ that satisfies the set of conditions $(G_2)$. We apply
the derivation $d_J$ to both sides of the last condition $(G_2)$
and use the commutation formula \eqref{djn}, which implies
\begin{eqnarray*}
0=d_J\nabla d_v\theta = d_hd_v\theta + 2i_Rd_v\theta =
d_vd_h\theta. \end{eqnarray*}
Last formula expresses the fact that $d_h\theta$ is a basic
$2$-form. We denote it $d_h\theta=\omega$ and have that 
\begin{eqnarray*}
  d\omega=dd_h\theta=d^2_h\theta=d_R\theta. \end{eqnarray*}
From the first condition
$(G_2)$, and the fact that the semi-basic $1$-form $\theta$ is
non-trivial, it follows that there exists a locally defined Lagrangian $L$
such that $\theta=d_JL$. We have now $d_hd_JL=\omega$. Since $\omega$
is a basic $2$-form, it follows that $2\omega=d_Ji_S\omega$ and hence
we obtain
\begin{eqnarray*}
-d_Jd_hL=\frac{1}{2}d_Ji_S\omega  \Longleftrightarrow
d_J\left(2d_hL+i_S\omega \right)=0. \end{eqnarray*}
From the last formula it follows that there is a locally defined
function $f$ on $TM$ such that $2d_hL+i_S\omega=d_Jf$. We consider now
the function $g=S(L)-f$ and obtain 
\begin{eqnarray}
\delta_SL=d_JS(L)-2d_hL=i_S\omega + d_Jg. \label{isodjg} \end{eqnarray}
From formula \eqref{isodjg} it follows  
\begin{eqnarray}
{\mathcal L}_Sd\theta=di_Sd_h\theta + dd_Jg, \label{lsodjg} \end{eqnarray}
which can be written further as
\begin{eqnarray*}
{\mathcal L}_Sd_h\theta + {\mathcal L}_Sd_v\theta = {\mathcal
  L}_Sd_h\theta  - i_Sd_R\theta + dd_Jg. \end{eqnarray*}
Using the last two conditions of the set $(G_2)$ and the above formula, it
follows that $dd_Jg=0$. We use this in formula \eqref{lsodjg} and
obtain $ {\mathcal L}_Sd\theta=di_S\omega$, 
which in view of Theorem \ref{thm1} gives
$\delta_SL=i_S\omega$. Last formula shows that  the spray $S$ is of gyroscopic type. 
\end{proof}
The three conditions $(G_1)$, in terms of a semi-basic $1$-form, are
equivalent to the four conditions of \cite[Theorem 2]{MSC11}, expressed in terms
of a multiplier matrix. The three conditions $(G_2)$ are equivalent to
the four conditions of \cite[Theorem 4]{MSC11}.

Locally, the two sets of equivalent conditions $(G_1)$ and $(G_2)$ can be
written as follows.
\begin{itemize}
\item[$(LG_{1})$] $g_{ij}=g_{ji}$, $\displaystyle\frac{\partial g_{ij}}{\partial
    y^{k}}=\frac{\partial g_{ik}}{\partial y^{j}}$, 
 $g_{ik}R_{j}^{k}-g_{jk}R_{i}^{k}  = \displaystyle \left(
   \frac{\partial\omega_{ij}}{\partial x^{k}} +
   \frac{\partial\omega_{jk}}{\partial x^{i}} +
   \frac{\partial\omega_{ki}}{\partial x^{j}}\right)y^{k},$ $\nabla
 g_{ij}=0$, \vspace{2mm}
\item[$(LG_2)$] $g_{ij}=g_{ji}$, $\displaystyle \frac{\partial g_{ij}}{\partial
    y^{k}}=\frac{\partial g_{ik}}{\partial y^{i}}$, 
$g_{ik}R_{j}^{k}-g_{jk}R_{i}^{k}  =
\left(g_{il}R_{jk}^{l}+g_{kl}R_{ij}^{l}+g_{jl}R_{ki}^{l}\right)y^k$, $\nabla g_{ij}=0.$
\end{itemize}
Last two conditions $(LG_1)$ represent conditions
\cite[(38)-(39)]{MSC11}, while second condition $(LG_2)$ represents
condition \cite[(49)]{MSC11}. 

\section{The homogeneous case} \label{section_hghc}

In this section we study the case when all the involved geometric
objects are homogeneous with respect to the fibre
coordinates. Consequently, we will restrict all our geometric structures to the slit tangent bundle $T_0M=TM\setminus\{0\}$.

A semispray $S\in \mathfrak{X}(T_0M)$ is called a \emph{spray} if it is $2$-homogeneous,
which means that $[{\mathbb C}, S]=S$. The coefficients
$G^i(x,y)$, in formula \eqref{semispray}, which are locally defined functions on $T_0M$, are homogeneous of order $2$
in the fibre coordinates. 

\begin{defn} \label{def_ft} Consider $S$ a spray and $\sigma\in
  \Lambda^1(T_0M)$ a semi-basic $1$-form, homogeneous of order $p\in
  {\mathbb N}^{\ast}$. We say that the spray $S$ is of \emph{Finslerian type with covariant force field $\sigma$} if there exists a
  Lagrangian $L\in C^{\infty}(T_0M)$, homogeneous of order $p$, such that $\delta_SL=\sigma$.  
\end{defn} 
We will discuss now some particular cases of Definition \ref{def_ft}
in the Finslerian context. For this, we recall the notion of a Finsler
function, Finsler metrizability and projective metrizability.
 
\begin{defn} \label{finsler} A positive function $F: TM \to {\mathbb R}$ is called a \emph{Finsler
  function} if it satisfies \begin{itemize}
\item[i)] $F$ is smooth on $T_0M$ and continuous on the null section;
\item[ii)] $F$ is positively homogeneous with respect to the fibre
  coordinates: $F(x,\lambda y)=\lambda F(x,y)$, $\forall \lambda \geq
  0$;
\item[iii)] $F^2$ is a regular Lagrangian on $T_0M$.
\end{itemize}
\end{defn}
When $\sigma=0$, depending on the values of $p$,
we obtain two important particular cases of the inverse
problem of Lagrangian mechanics. The case $\sigma=0$,  $p=1$, and
$L=F$, for a Finsler function $F$, is known as the \emph{projective
metrizability problem} \cite{BM11, BM12, GM00, Klein62}. The case $\sigma=0$, $p=2$, and $L=F^2$,
for a Finsler function $F$, is known as the \emph{Finsler metrizability
problem} \cite{Muzsnay06}. For the general case, 
when $\sigma\neq 0$, we will see that we also have to make
distinction between the two cases $p=1$ and $p>1$. 

\subsection{The case $p>1$} Next theorem shows that, in the homogeneous context, the
generalized Helmholtz condition $(GH_2)$ is a consequence of the other
two generalized Helmholtz conditions $(GH_1)$ and $(GH_3)$ of Theorem \ref{thm2}.  

\begin{thm} \label{thm5} Consider $S$ a spray and $\sigma\in
  \Lambda^1(T_0M)$ a semi-basic $1$-form, homogeneous of order $p> 1$. The spray $S$ is of \emph{Finslerian type with covariant force
  field} $\sigma$ if and only if there exists a non-trivial, semi-basic form
  $\theta\in \Lambda^1(T_0M)$, homogeneous of order $(p-1)$ that satisfies the two
  generalized Helmholtz conditions $(GH_1)$ and $(GH_3)$ of Theorem \ref{thm2}. 
\end{thm}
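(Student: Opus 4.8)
The plan is to show that, in the homogeneous setting with $p>1$, the condition $GH_2$ follows automatically once $GH_1$ and $GH_3$ hold. By Theorem \ref{thm2}, the "only if" direction is immediate: if $S$ is of Finslerian type with covariant force field $\sigma$, then the Poincar\'e-Cartan $1$-form $\theta=d_JL$ is homogeneous of order $(p-1)$ (since $L$ is $p$-homogeneous and $d_J$ lowers the degree of homogeneity by one) and satisfies all three of $GH_1$, $GH_2$, $GH_3$. So the content is the converse: assuming $\theta$ is non-trivial, semi-basic, $(p-1)$-homogeneous, and satisfies $GH_1$ and $GH_3$, I must derive $GH_2$ and then invoke Theorem \ref{thm2} to conclude that $S$ is of Finslerian type (checking along the way that the Lagrangian produced is $p$-homogeneous, which follows because $\mathcal{L}_S\theta-\sigma=dL$ with the left side $p$-homogeneous forces $dL$, hence $L$ up to a constant, to be $p$-homogeneous).

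The key tool will be Euler's theorem for homogeneous forms, i.e. $\mathcal{L}_{\mathbb{C}}\theta = (p-1)\theta$ for $\theta$ and $\mathcal{L}_{\mathbb{C}}\sigma = p\,\sigma$, together with the commutation relations between $i_{\mathbb C}$, ${\mathcal L}_{\mathbb C}$ and the derivations $d_J$, $d_h$, $d_\Phi$, $\nabla$ on the slit tangent bundle; for a spray one has the extra identities $[{\mathbb C},S]=S$, $i_{\mathbb C}\Phi$ expressed via $R^i_j$ being $2$-homogeneous, etc. First I would apply the derivation $i_{\mathbb C}$ (equivalently $i_S$, since on semi-basic objects these behave similarly, or use $\mathbb{C}$ directly) to the desired identity $GH_2$: $d_\Phi\theta = \tfrac12\nabla d_J\sigma - d_h\sigma$, and show that both the condition $GH_2$ and its $i_{\mathbb C}$-contraction carry the same homogeneity content, so that $GH_2$ is equivalent to its contraction plus a lower-order piece. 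Concretely, I expect $i_{\mathbb C}d_\Phi\theta$ to be expressible — using $GH_1$ and the homogeneity of $\theta$ — in terms of $\nabla\theta$ or $d_h\theta$, and similarly the right-hand side of $GH_2$ contracted with $\mathbb{C}$ reduces, using $GH_3$ and the homogeneity of $\sigma$, to the matching expression. The mechanism is: differentiate $GH_3$, which controls $\nabla d_v\theta$, with $d_J$ or contract with $\mathbb{C}$, to recover information about $d_h\theta$ and $d_\Phi\theta$; this mirrors the derivation in the proof of Theorem \ref{thm2} where $d_h\theta=\tfrac12 d_J\sigma$ was extracted, but now homogeneity upgrades a "$d$-closed" conclusion to an exact identity because the relevant closed form is homogeneous of nonzero degree and hence $d_J$- or $d$-exact with an explicitly computable primitive (namely $\tfrac{1}{p-1}i_{\mathbb C}$ of it).

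The main obstacle will be bookkeeping the homogeneity degrees and the commutators correctly so that the "lower-order piece" I alluded to actually vanishes — i.e. verifying that the operator "contract with $\mathbb{C}$ (or $S$)" is \emph{injective} on the space of semi-basic $2$-forms of the relevant homogeneity degree, which is where $p>1$ is essential (for $p=1$ the degree-zero forms are exactly the kernel, and indeed the theorem fails, matching the separate treatment of $p=1$ promised in the text). So I would: (i) record Euler identities and the needed commutation formulas on $T_0M$; (ii) show $i_{\mathbb C}$ applied to both sides of a candidate identity is an iso in the right degree; (iii) verify $i_{\mathbb C}GH_2$ holds as a consequence of $GH_1$, $GH_3$ and homogeneity via the $d_J$-differentiation trick; (iv) conclude $GH_2$, then apply Theorem \ref{thm2} and check $p$-homogeneity of the resulting $L$. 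Step (ii)–(iii) is the crux; the rest is routine.
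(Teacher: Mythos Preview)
Your overall intuition---that homogeneity will upgrade a ``closed'' conclusion to an outright vanishing, and that $p>1$ enters precisely because a nonzero degree is needed---is correct and matches the paper. However, the specific mechanism you propose in steps (ii)--(iii) has a genuine gap.

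The claim that $i_{\mathbb{C}}$ is injective on semi-basic $2$-forms of the relevant degree is false: since $\mathbb{C}$ is vertical and semi-basic forms vanish on any vertical argument, $i_{\mathbb{C}}$ annihilates \emph{every} semi-basic form. Both sides of $GH_2$ are semi-basic $2$-forms, so contracting $GH_2$ with $\mathbb{C}$ yields the triviality $0=0$. Your parenthetical ``equivalently $i_S$, since on semi-basic objects these behave similarly'' is also off: $\mathbb{C}$ is vertical while $S$ is horizontal, so $i_{\mathbb{C}}$ and $i_S$ behave very differently on semi-basic forms, and in any case $i_S$ is not injective on semi-basic $2$-forms either.

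The paper does not attempt to verify $GH_2$ and then quote Theorem~\ref{thm2}. It proceeds directly: from $GH_1$ one has $\theta=d_JL$ with $L=\tfrac{1}{p}\,i_S\theta$ the unique $p$-homogeneous primitive. Applying $d_J$ to $GH_3$ and using the commutation formula \eqref{djn} (exactly the passage from \eqref{djgh3} to \eqref{basic2} in the proof of Theorem~\ref{thm2}) yields $d_v\bigl(d_h\theta-\tfrac12\,d_J\sigma\bigr)=0$; since $d_h\theta-\tfrac12\,d_J\sigma$ is already semi-basic, this makes it \emph{basic}, hence $0$-homogeneous. But it is also $(p-1)$-homogeneous, so for $p>1$ it must vanish, giving $d_h\theta=\tfrac12\,d_J\sigma$ outright---this is where your ``homogeneity upgrade'' intuition actually lands, applied not to $GH_2$ but to the auxiliary form $d_h\theta-\tfrac12\,d_J\sigma$. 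The paper then computes $\delta_SL$ in two independent ways: once by applying $i_S$ to $d_h\theta=\tfrac12\,d_J\sigma$ (formula \eqref{dsl1}), and once by evaluating $GH_3$ on the pair $(\mathbb{C},S)$ to obtain $(p-1)S(L)=i_S\sigma$ (formula \eqref{dsl2}); equating the two expressions forces $\delta_SL=\sigma$. So the crux is not an injectivity of $i_{\mathbb{C}}$ on $2$-forms, but the fact that a basic form which is simultaneously homogeneous of nonzero order must vanish.
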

\begin{proof}
Consider $\theta$ a non-trivial, semi-basic $1$-form, homogeneous of order $(p-1)$
that satisfies the conditions $(GH_1)$ and $(GH_3)$ of Theorem
  \ref{thm2}. We will prove that  $\delta_S(i_S\theta/p)=\sigma$. In this case, the
  Lagrangian is given by $L=i_S\theta/p$ and it is homogeneous of order $p$.

For $\theta$, satisfying the condition $(GH_1)$, we use \cite[Proposition
4.2]{BD09}. It follows that 
\begin{eqnarray} L=\frac{1}{p} i_S\theta \label{list} \end{eqnarray} is the only
$p$-homogeneous Lagrangian that satisfies $d_JL=\theta$.  

We use the same argument that we used in the proof of Theorem
\ref{thm2}, from formula \eqref{djgh3} to formula \eqref{basic3}. It
follows that the condition $(GH_3)$ implies that the $2$-form $d_h\theta
- d_J\sigma/2$ is a basic form and hence it is homogeneous of order
$0$. Since $\theta$ is homogeneous of order
$(p-1)$, and $\sigma$ is homogeneous of order
$p>1$, it follows that $d_h\theta - d_J\sigma/2$ is homogeneous of order $p-1\neq 0$. Consequently, we
obtain that this $2$-form has to vanish and therefore we have 
\begin{eqnarray}
d_h\theta=\frac{1}{2} d_J\sigma. \label{dhtf}\end{eqnarray}  
In formula \eqref{dhtf} we apply to the left the interior product
$i_S$ and use the commutation formula \eqref{ikdx} to evaluate
$i_Sd_h\theta$ and $i_Sd_J\sigma$. We have
\begin{eqnarray*}
-d_hi_S\theta + {\mathcal L}_S\theta +i_{[h,S]}\theta =
\frac{1}{2}\left(-d_Ji_S\sigma + {\mathcal L}_{\mathbb C} \sigma +
  i_{[J,S]}\sigma\right). \end{eqnarray*} 
Using the fact that $d_JL=\theta$ it follows that $i_{[h,S]}\theta = i_{J\circ
  [h, S]}dL=-d_vL$. Now, we use the $p$-homogeneity of $\sigma$ and
$i_{[J,S]}\sigma = i_{h-v}\sigma = \sigma$ and hence we obtain
\begin{eqnarray*}
-pd_hL +  {\mathcal L}_Sd_JL - d_vL = - \frac{1}{2} d_Ji_S\sigma +
\frac{p+1}{2} \sigma. \end{eqnarray*}
From the above formula we can express the Lagrange differential as
follows
\begin{eqnarray}
\delta_SL = (p-1) d_hL - \frac{1}{2} d_Ji_S\sigma +
\frac{p+1}{2} \sigma. \label{dsl1}\end{eqnarray}  
We will evaluate now the two $2$-forms in both sides of formula $(GH_3)$
on the pair of vectors $({\mathbb C}, S)$. First we have
$i_{\mathbb C}d_v\theta = i_{\mathbb C}d\theta  = {\mathcal L}_{\mathbb
  C} \theta = (p-1)\theta.$
From this formula and using also \eqref{list} we have   
\begin{eqnarray*}
i_Si_{\mathbb C}d_v\theta = (p-1)i_S\theta = p(p-1)L. \end{eqnarray*}
According to \cite[Proposition 3.6]{BD09}, in the homogeneous case,
the dynamical covariant derivative $\nabla$ commutes with the inner
products $i_S$ and $i_{\mathbb C}$. From the above formula, it follows
that the value of the $2$-form $\nabla d_v\theta$ on the pair of
vectors $({\mathbb C}, S)$ is given by 
\begin{eqnarray}
i_Si_{\mathbb C}\nabla d_v\theta = p(p-1)S(L). \label{isicn} \end{eqnarray}
Since $({\mathbb F}+J)({\mathbb C})=S$, it follows that 
$ \left( i_{{\mathbb F}+J} d_J\sigma \right) ({\mathbb C}, S)=d_J\sigma
(S,S)=0.$ Using the fact that $\sigma$ is semi-basic and $p$-homogeneous, we
have $i_{\mathbb C}d_v\sigma = i_{\mathbb C}d\sigma = {\mathcal L}_{\mathbb
  C}\sigma = p\sigma.$ It follows that the value of the $2$-form from
the right hand side of $(GH_3)$ on the pair of vectors $({\mathbb C}, S)$ is given by 
\begin{eqnarray}
i_Si_{\mathbb C}\left(d_v\sigma - \frac{1}{2}i_{{\mathbb F}+J}
  d_J\sigma\right) = pi_S\sigma. \label{isicd} \end{eqnarray}
From the two formulae \eqref{isicn} and \eqref{isicd} it follows that 
\begin{eqnarray}
(p-1)S(L) = i_S\sigma. \label{slis}\end{eqnarray} 
If we use now the commutation formula \eqref{dkl} for $J$ and $S$ we
obtain $ d_{[J,S]}L = d_J S(L) - {\mathcal L}_S d_JL$.
In this formula, we replace $S(L)$ from \eqref{slis} and use the
fact that $[J,S]=h-v$. It follows
\begin{eqnarray*}
d_hL- d_vL= \frac{1}{p-1}d_Ji_S\sigma - {\mathcal L}_S
d_JL. \end{eqnarray*}
From the above formula we can express the Lagrange differential as
follows
\begin{eqnarray}
\delta_SL = \frac{1}{p-1} d_Ji_S\sigma - 2
d_hL. \label{dsl2}\end{eqnarray}  
From the two formulae \eqref{dsl1} and \eqref{dsl2} it follows that
$\delta_SL=\sigma$ and hence the spray $S$ is of Finslerian type with covariant force field $\sigma$.
\end{proof}
When $\sigma=0$, Theorem \ref{thm5} reduces to \cite[Lemma 3.4]{Prince08} and \cite[Theorem 3.4]{Rossi14}, where it has been
shown that the Helmholtz condition which involves the Jacobi endomorphism is a consequence of the other
Helmholtz conditions. 

From Theorem \ref{thm5}, in the particular case when $\sigma=0$, we obtain the following
corollary that provides a characterization of the Finsler
metrizability problem in terms of a semi-basic, $1$-homogeneous, 
$1$-form.
\begin{cor}  \label{cor3} A spray $S$ is Finsler metrizable if and
  only if there exists a semi-basic $1$-form $\theta\in
  \Lambda^1(T_0M)$ that satisfies
  the following sets of conditions
\begin{itemize}
\item[(FMA)] $d\theta$ is a symplectic form, $i_S\theta>0$; 
\item[(FMD)] $d_J\theta=0$, $\nabla d_v\theta=0$, ${\mathcal
    L}_{\mathbb C}\theta=\theta.$
\end{itemize}
\end{cor}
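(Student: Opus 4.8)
The plan is to deduce Corollary \ref{cor3} directly from Theorem \ref{thm5} by specializing to $\sigma=0$ and $p=2$, and then translating the abstract statement into the explicit conditions FMA and FMD. By definition, $S$ is Finsler metrizable precisely when $S$ is a spray of Finslerian type with covariant force field $\sigma=0$ and a Lagrangian of the form $L=F^2$ for a Finsler function $F$; by Definition \ref{def_ft} this means there exists a $2$-homogeneous Lagrangian $L$ with $\delta_S L=0$, where regularity of $L$ and positivity of $F^2$ encode the remaining requirements in Definition \ref{finsler}. So the first step is to observe that ``$S$ Finsler metrizable'' is exactly the $p=2$, $\sigma=0$ instance of the hypothesis of Theorem \ref{thm5}, up to the regularity/positivity conditions that will become FMA.

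Next I would invoke Theorem \ref{thm5} with $p=2$ and $\sigma=0$: the spray $S$ admits such a Lagrangian if and only if there is a non-trivial, semi-basic $1$-form $\theta\in\Lambda^1(T_0M)$, homogeneous of order $p-1=1$, satisfying $GH_1$ and $GH_3$, which for $\sigma=0$ read $d_J\theta=0$ and $\nabla d_v\theta=0$. The homogeneity of order $1$ is the statement $\mathcal{L}_{\mathbb C}\theta=\theta$. These three give exactly the set FMD. The Lagrangian recovered from Theorem \ref{thm5} is $L=i_S\theta/p=i_S\theta/2$, which is $2$-homogeneous, so that $F=\sqrt{i_S\theta/2}$ is the candidate Finsler function; conversely, from a Finsler-metrizing $L$ one takes $\theta=d_JL$, which is semi-basic, $1$-homogeneous (since $d_J$ lowers homogeneity degree by one and $L$ is $2$-homogeneous), satisfies $d_J\theta=d_J^2L=0$, and satisfies $\nabla d_v\theta=0$ by the $\sigma=0$ case of $GH_3$ via Theorem \ref{thm2}.

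It then remains to match the regularity and positivity requirements, i.e.\ condition FMA: ``$d\theta$ is a symplectic form'' and ``$i_S\theta>0$''. The point is that $L=i_S\theta/2$ is a \emph{regular} Lagrangian exactly when its Poincar\'e-Cartan $2$-form $dd_JL=d\theta$ is symplectic — and since $d_J\theta=0$ with $\theta$ semi-basic and $1$-homogeneous forces $\theta=d_JL$ for $L=i_S\theta/2$ by \cite[Proposition 4.2]{BD09}, this is equivalent to $d\theta$ being symplectic. Moreover $F^2=L=i_S\theta/2$ being positive is exactly $i_S\theta>0$, which also guarantees $F$ is a genuine (real, positive) Finsler function as in Definition \ref{finsler}, the smoothness on $T_0M$ and homogeneity being automatic from those of $\theta$. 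Assembling these equivalences yields the two-sided implication between ``$S$ Finsler metrizable'' and ``$\exists\,\theta$ satisfying FMA and FMD''.

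The main obstacle I anticipate is the careful bookkeeping of the regularity/positivity equivalence in FMA: one must check that the non-triviality hypothesis on $\theta$ in Theorem \ref{thm5} is automatically upgraded to genuine regularity of $d\theta$ in the presence of the homogeneity condition and that $i_S\theta>0$ is the right incarnation of the positivity of $F^2$ — in particular that $i_S\theta>0$ together with $1$-homogeneity of $\theta$ implies $F=\sqrt{i_S\theta/2}$ is smooth and positive on $T_0M$, so that conditions i)–iii) of Definition \ref{finsler} all hold. Everything else is a direct citation of Theorem \ref{thm5}, Theorem \ref{thm2}, and \cite[Proposition 4.2]{BD09}, so the proof should be short.
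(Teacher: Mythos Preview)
Your proposal is correct and follows essentially the same route as the paper: specialize Theorem \ref{thm5} to $p=2$, $\sigma=0$ so that $GH_1$, $GH_3$ and $1$-homogeneity become FMD, recover $L=i_S\theta/2$, and then identify FMA with the regularity of $L$ (via $dd_JL=d\theta$ symplectic) and the positivity $L>0$ needed to write $L=F^2$ for a Finsler function. The paper's justification is exactly this, stated in one short paragraph after the corollary.
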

The first two differential conditions, $(\textrm{FMD})$, for Finsler
metrizability, represent the two Helmholtz conditions $(H_1)$ and $(H_3)$
of Corollary \ref{cor1}.

\subsection{The case $p=1$}

Next theorem gives a reformulation of the generalised Helmholtz
conditions in the presence of a covariant force field $\sigma$, homogeneous of order $1$. In this
case, the semi-basic $1$-form $\sigma$ has to satisfy the
condition $i_S\sigma=0$, due to the following argument. Consider a spray $S$ and a
$1$-homogeneous, semi-basic $1$-form $\sigma$. We assume that there is a
$1$-homogeneous Lagrangian $L$ such that $\delta_SL=\sigma$. If we
apply $i_S$ to both sides of this formula we obtain $S\left({\mathbb
    C}(L)-L\right)=i_S\sigma$ and hence we necessarily should have $i_S\sigma=0$.  

\begin{thm} \label{thm7}
Consider $S$ a spray and a semi-basic $1$-form $\sigma\in
  \Lambda^1(T_0M)$, homogeneous
of order $1$ that satisfies the condition $i_S\sigma=0$. The spray $S$
is of Finslerian type with
covariant force field $\sigma$ if and only if there exists a
non-trivial, $0$-homogeneous semi-basic $1$-form $\theta\in
  \Lambda^1(T_0M)$ that satisfies one of the
following two equivalent sets of conditions
\begin{itemize}
\item[i)] $d_J\theta=0$, $d_h\theta=\displaystyle\frac{1}{2}d_J\sigma$; \vspace{2mm}
\item[ii)] $(GH_1)$, $(GH_2)$, $(GH_3)$.
\end{itemize}
\end{thm}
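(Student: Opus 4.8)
The plan is to prove the equivalence in two stages: first establish the equivalence of the two sets of conditions (i) and (ii) with each other, and then connect either of them to the statement that $S$ is of Finslerian type with covariant force field $\sigma$. The homogeneity degree $p=1$ forces $\theta$ to be $0$-homogeneous, and the constraint $i_S\sigma=0$ will play a role analogous to the non-vanishing arguments used in the proof of Theorem \ref{thm5}.

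\medskip

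\noindent\emph{Step 1: (i) $\Longrightarrow$ (ii).} Assume $\theta$ is a non-trivial, $0$-homogeneous semi-basic $1$-form with $d_J\theta=0$ and $d_h\theta=\tfrac12 d_J\sigma$. Condition $GH_1$ is immediate. For $GH_2$, I would apply $\nabla$ to $d_h\theta=\tfrac12 d_J\sigma$ and use formula \eqref{ndht} in the form $\nabla d_h\theta - d_{\Phi}\theta = d_h\sigma$ (which was derived in the proof of Theorem \ref{thm2} purely from $d_J\theta=0$ and $i_{{\mathbb F}+J}d_v\theta=0$, so it is available here). Rearranging gives $d_{\Phi}\theta = \nabla d_h\theta - d_h\sigma = \tfrac12\nabla d_J\sigma - d_h\sigma$, which is exactly $GH_2$. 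For $GH_3$, I would start from the identity \eqref{lsdtext} (the expansion of ${\mathcal L}_Sd\theta$ via \eqref{nabla3}), and aim to show \eqref{lsdts}, i.e. ${\mathcal L}_Sd\theta=d\sigma$; by Theorem \ref{thm1} this is equivalent to $S$ being of Lagrangian type, but more directly it will let me extract $GH_3$ the same way it was extracted in the converse direction of Theorem \ref{thm2}. The point is that $d_h\theta=\tfrac12 d_J\sigma$ together with \eqref{ndht} feeds back into the expansion to give ${\mathcal L}_Sd\theta = d_h\sigma + \nabla d_v\theta + (\text{known terms})$, and matching against $d\sigma = d_h\sigma + d_v\sigma$ produces $GH_3$. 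The only place where the homogeneity $p=1$ and $i_S\sigma=0$ might be needed is in checking that the locally defined function $L$ with $d_JL=\theta$ can be taken $1$-homogeneous — here I would invoke the analogue of \cite[Proposition 4.2]{BD09} for degree $p-1=0$, or simply set $L$ directly and verify homogeneity using $i_S\sigma=0$.

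\medskip

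\noindent\emph{Step 2: (ii) $\Longrightarrow$ (i).} Assume $GH_1$--$GH_3$ hold for a $0$-homogeneous non-trivial $\theta$. Running the argument from the proof of Theorem \ref{thm2} (from formula \eqref{djgh3} through \eqref{basic3}) shows that $d_h\theta-\tfrac12 d_J\sigma$ is a closed basic $2$-form, hence homogeneous of order $0$. In contrast to Theorem \ref{thm5}, where the degree mismatch forced vanishing, here both $\theta$ and $\sigma$ conspire so that $d_h\theta-\tfrac12 d_J\sigma$ is genuinely homogeneous of degree $0$ and need not vanish a priori. So I cannot conclude $d_h\theta=\tfrac12 d_J\sigma$ for the original $\theta$; instead, as in the proof of Theorem \ref{thm2}, I write $d_h\theta-\tfrac12 d_J\sigma = d\beta$ for a locally defined basic $1$-form $\beta$ and replace $\theta$ by $\tilde\theta=\theta-\beta$. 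The subtlety is that $\beta$ is only basic, not necessarily $0$-homogeneous, so $\tilde\theta$ might fail to be $0$-homogeneous. I would resolve this by decomposing $\beta$ into its homogeneous components and discarding the degree-$0$ part does not help directly; rather, I expect one uses $i_S\sigma=0$ and the fact that a closed basic $2$-form on $T_0M$ which is homogeneous of degree $0$ has a primitive that can be chosen homogeneous of degree $0$ (basic forms descend to $M$, where the Euler/homogeneity operator acts trivially on the base variables), so $\beta$ may be taken $0$-homogeneous and $\tilde\theta$ is then $0$-homogeneous, non-trivial (since $\beta$ basic), and satisfies both conditions in (i).

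\medskip

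\noindent\emph{Step 3: (i) $\Longleftrightarrow$ Finslerian type.} Given (i) for a $0$-homogeneous $\theta$, the existence of a $1$-homogeneous $L$ with $d_JL=\theta$ follows from the homogeneity analogue used in Theorem \ref{thm5}; then $d_hL=\tfrac12 d_J\sigma$ up to the identification $d_h\theta=d_hd_JL=-d_Jd_hL$, and a short computation parallel to \eqref{dhdjis}--\eqref{djiso}, together with the fact that $GH_1$--$GH_3$ are equivalent to \eqref{lsdts}, yields $\delta_SL=\sigma$. Conversely, if $\delta_SL=\sigma$ for a $1$-homogeneous $L$, take $\theta=d_JL$ (which is $0$-homogeneous and non-trivial); then $GH_1$ is automatic, and applying $d_J$ to $\nabla d_JL - d_hL=\sigma$ together with $[d_J,\nabla]=d_h+2i_R$ and $i_Rd_JL=0$ gives $2d_hd_JL=d_J\sigma$, i.e. $d_h\theta=\tfrac12 d_J\sigma$, which is condition (i). The constraint $i_S\sigma=0$ is exactly the compatibility condition making $1$-homogeneity of $L$ consistent with $\delta_SL=\sigma$ (since $i_S\delta_SL=S(E_L)=S({\mathbb C}L-L)=0$ when $L$ is $1$-homogeneous), so no extra hypothesis beyond what is stated is needed.

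\medskip

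\noindent\emph{Main obstacle.} The delicate point is Step 2: showing that the gauge correction $\beta$, obtained from the closed basic $2$-form $d_h\theta-\tfrac12 d_J\sigma$, can be chosen so that the corrected form $\tilde\theta$ is still $0$-homogeneous. Unlike the case $p>1$ in Theorem \ref{thm5}, the degree count does not kill the obstruction outright, so one must argue carefully using the structure of basic forms on $T_0M$ (equivalently, forms pulled back from $M$) and the hypothesis $i_S\sigma=0$. Everything else is a reassembly of the commutation identities \eqref{djn}, \eqref{djphi}, \eqref{ikdx} and the homogeneity propositions from \cite{BD09} already invoked in Theorems \ref{thm2} and \ref{thm5}.
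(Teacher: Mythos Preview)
Your Step 1 contains a genuine circularity. You invoke formula \eqref{ndht}, namely $\nabla d_h\theta - d_{\Phi}\theta = d_h\sigma$, claiming it ``was derived in the proof of Theorem \ref{thm2} purely from $d_J\theta=0$ and $i_{{\mathbb F}+J}d_v\theta=0$.'' It was not: in that proof \eqref{ndht} is obtained by applying $i_h$ to \eqref{lsdts}, i.e.\ to ${\mathcal L}_Sd\theta=d\sigma$, which is precisely what you are trying to establish. Indeed, if you expand ${\mathcal L}_Sd\theta$ via \eqref{lsdtext} and substitute $d_h\theta=\tfrac12 d_J\sigma$, the equation ${\mathcal L}_Sd\theta=d\sigma$ splits into its semi-basic and remaining parts and becomes \emph{equivalent} to $GH_2$ and $GH_3$ together. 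So you cannot extract $GH_2$ or $GH_3$ from (i) by these formula manipulations alone. The paper avoids this by a different route: from $d_J\theta=0$ and $0$-homogeneity it builds $L=i_S\theta$, the unique $1$-homogeneous Lagrangian with $d_JL=\theta$; then the second condition (i) reads $d_J(2d_hL+\sigma)=0$, so $2d_hL+\sigma=-d_Jf$ for some (locally defined, $2$-homogeneous) $f$, whence $\delta_SL=\sigma+d_Jg$ with $g=f-S(L)$. Applying $i_S$ and using $i_S\sigma=0$ gives $0={\mathbb C}(g)=2g$, so $\delta_SL=\sigma$, and (ii) follows from Theorem \ref{thm2}. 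This is where the hypothesis $i_S\sigma=0$ actually does work; in your sketch it never gets used in a decisive way.

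Your ``main obstacle'' in Step 2 is not an obstacle. A basic $1$-form on $T_0M$ is, by definition, one for which both it and its exterior derivative are semi-basic; locally this forces $\beta=\beta_i(x)\,dx^i$ with coefficients independent of $y$, hence ${\mathcal L}_{\mathbb C}\beta=0$. So $\beta$ is automatically $0$-homogeneous, $\tilde\theta=\theta-\beta$ is $0$-homogeneous and non-trivial, and satisfies (i) exactly as in the paper. No appeal to $i_S\sigma=0$ is needed here, and the degree argument you attempt is unnecessary.
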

\begin{proof}
In view of Theorem \ref{thm2}, it remains to prove the equivalence of
the two sets of conditions. 

First we assume that there exists a non-trivial, 
$0$-homogenous, semi-basic $1$-form $\theta$ that satisfies i). Since $d_J\theta=0$, using \cite[Proposition 4.2]{BD09}
it follows that $L=i_S\theta$ is the only $1$-homogeneous Lagrangian
such that $\theta=d_JL$. The second condition i) can be written as
$d_J\left(2d_hL+\sigma\right)=0$, which implies that there exists a
  locally defined $2$-homogeneous function $f$ such that $2d_hL+\sigma = -d_Jf$. We
  consider now the $2$-homogeneous function $g=f-S(L)$. It follows
  $\delta_SL=\sigma + d_Jg$. We apply $i_S$ to both sides of the
  last formula and obtain $0={\mathbb C}(g)=2g$. Therefore
  $\delta_SL=\sigma$, which in view of Theorem \ref{thm2} implies that
  the conditions ii) are satisfied.  

For the converse, assume that there exists a non-trivial, $0$-homogenous, semi-basic $1$-form
$\theta$ that satisfies the three condition $(GH_1) - (GH_3)$. As we have
seen in the proof of Theorem \ref{thm2}, the two conditions $(GH_2)$ and
$(GH_3)$ imply that there exists a basic $1$-form $\beta$ such that
formula \eqref{basic4} is satisfied. The $0$-homogenous, semi-basic
$1$-form $\tilde\theta=\theta-\beta$ is non-trivial, satisfies the set of conditions
$i)$ and this completes the proof. 
\end{proof}
When $\sigma=0$, the two conditions i) were used to characterize the
projective metrizability of a spray in \cite[Theorem
3.8]{BM11}. Again, when $\sigma=0$, the three conditions ii) were
expressed in terms of the angular metric of some Finsler function as
classic Helmholtz conditions in \cite{CMS13} and their equivalence
with the set of conditions i) was proven in \cite[Theorem 4]{CMS13}. 

In the particular case when the covariant force field $\sigma$ is of
gyroscopic type, we obtain, using Theorem \ref{thm7}, the following
characterization for Finslerian gyroscopic sprays.
\begin{cor} \label{cor5} A spray $S$ is of Finslerian gyroscopic type
  if and only if there exists a non-trivial, semi-basic $1$-form
  $\theta\in \Lambda^1(T_0M)$ that satisfies the following conditions 
\begin{eqnarray}
{\mathcal L}_{\mathbb C}\theta=0, \quad d_J\theta=0, \quad d_h\theta=\omega, \label{fgt} \end{eqnarray}
for $\omega$ a basic $2$-form.
\end{cor}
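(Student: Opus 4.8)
The plan is to derive this directly from Theorem \ref{thm7} by specializing the covariant force field $\sigma$ to the gyroscopic type, $\sigma = i_S\omega$ for a basic $2$-form $\omega$. First I would check that the hypotheses of Theorem \ref{thm7} are met: since $\omega$ is basic it is (via $\mathcal{L}_{\mathbb C}$) homogeneous of order zero, so $i_S\omega$ is homogeneous of order $1$ (as $S$ is a spray); and $i_S\sigma = i_Si_S\omega = \omega(S,S) = 0$, since $\omega$ is skew-symmetric. Thus a spray $S$ is of Finslerian gyroscopic type precisely when there is a non-trivial, $0$-homogeneous semi-basic $1$-form $\theta$ satisfying condition i) of Theorem \ref{thm7}, namely $d_J\theta = 0$ and $d_h\theta = \tfrac{1}{2}d_J\sigma$.

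The key step is then to identify $\tfrac{1}{2}d_J\sigma$ with a basic $2$-form when $\sigma = i_S\omega$. This is exactly the computation \eqref{djiso} carried out in the proof of Theorem \ref{thm4}: using the commutation formula \eqref{ikdx} with $K=J$, $X=S$, together with $d_J\omega = 0$, $\mathcal{L}_{\mathbb C}\omega = 0$ and $i_{[J,S]}\omega = i_{\mathrm{Id}}\omega = 2\omega$ (all of which hold because $\omega$ is basic), one gets $d_Ji_S\omega = 2\omega$, hence $\tfrac{1}{2}d_J\sigma = \omega$. So condition i) of Theorem \ref{thm7} becomes: $d_J\theta = 0$ and $d_h\theta = \omega$, with $\theta$ a non-trivial, $0$-homogeneous semi-basic $1$-form, and the $0$-homogeneity is precisely $\mathcal{L}_{\mathbb C}\theta = 0$. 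This yields the three conditions \eqref{fgt}.

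For the converse direction inside the corollary, I would reverse the above: given a non-trivial semi-basic $\theta$ with $\mathcal{L}_{\mathbb C}\theta = 0$, $d_J\theta = 0$, $d_h\theta = \omega$ for some basic $2$-form $\omega$, set $\sigma = i_S\omega$; then $\sigma$ is $1$-homogeneous with $i_S\sigma = 0$, and $\tfrac{1}{2}d_J\sigma = \omega = d_h\theta$, so condition i) of Theorem \ref{thm7} holds, giving that $S$ is of Finslerian type with covariant force field $\sigma = i_S\omega$ — that is, of Finslerian gyroscopic type. I do not anticipate a serious obstacle here; the only thing to be careful about is bookkeeping the homogeneity degrees ($\omega$ basic $\Rightarrow$ degree $0$; $i_S\omega$ $\Rightarrow$ degree $1$; $\theta$ $\Rightarrow$ degree $0$) and making sure that "non-trivial" is preserved, which it is since no basic correction $\beta$ needs to be subtracted once condition i) is already in hand. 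The main point to state clearly is simply the identity $d_Ji_S\omega = 2\omega$ for basic $\omega$, which packages all the work.
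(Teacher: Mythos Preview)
Your proposal is correct and follows essentially the same route as the paper: both arguments reduce the corollary to Theorem \ref{thm7} by noting that for a basic $2$-form $\omega$ one has $d_Ji_S\omega=2\omega$ (formula \eqref{djiso}), so that condition i) becomes $d_J\theta=0$, $d_h\theta=\omega$. The paper's proof is terser, leaving the verification of the hypotheses of Theorem \ref{thm7} (homogeneity of $\sigma=i_S\omega$ and $i_S\sigma=0$) and the converse direction implicit, whereas you spell them out explicitly.
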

\begin{proof}
If $\omega$ is a basic $2$-form, using formula \eqref{djiso}, it
follows that $d_Ji_S\omega=2\omega$. Therefore, the last
condition i) of Theorem \ref{thm7} is equivalent to the last condition \eqref{fgt}.
\end{proof}
As we have seen in the proof of Theorem \ref{thm4}, the last two conditions
\eqref{fgt} are necessary conditions for an arbitrary semispray to be of
gyroscopic type. Above Corollary \ref{cor5} states that in the
homogeneous case, these two conditions are also sufficient.

In the particular case when $\omega=0$, the equations \eqref{fgt}
coincide with the differential
equations \cite[(3.8)]{BM11}, which together with some
algebraic equations, provide a characterization of projectively metrizable sprays.

\section{Examples} \label{section_ex}

\subsection{Non-variational, projectively metrizable sprays that are
  dissipative} \label{section_ex1}

In this subsection we will provide examples of projectively metrizable
sprays, which are not Finsler metrizable, and hence not variational but are of dissipative type.

A spray $S$ is \emph{projectively metrizable} if there exists a Finsler
function $F$ such that $\delta_SF=0$. We underline the fact that $F$ is a Finsler function
and it is not a regular Lagrangian. However, for a Finsler function
$F$, we have that $L=F^2$ is a regular Lagrangian. A spray $S$ is
\emph{Finsler metrizable} if there exists a Finsler
function $F$ such that $\delta_SF^2=0$. In this case, we say that $S$
is the geodesic spray of the Finsler function $F$.

In \cite{BM12} it has been shown that for a given geodesic spray, its projective class contains
infinitely many sprays that are not Finsler metrizable. More exactly,
in \cite[Theorem 5.1]{BM12} it has been shown that if $S_F$ is the geodesic
spray of a Finsler function $F$, then there are infinitely many values
of $\lambda \in {\mathbb R}$ such that the projectively related sprays $S=S_F-2\lambda
F{\mathbb C}$ are not Finsler metrizable. We consider such a spray
$S$, which is projectively metrizable and hence satisfies
$\delta_SF=0$, and we will prove that the spray $S$  is of dissipative
type. Using the fact that $S_F$ is the geodesic spray of $F$ we have
that $S_F(F)=0$ and hence $S(F)=S_F(F)-2\lambda F{\mathbb
  C}(F)=-2\lambda F^2$. Therefore 
\begin{eqnarray}
\delta_SF^2=2S(F)d_JF + 2F\delta_SF= 2S(F)d_JF=-4\lambda F^2 d_JF. \label{dispm}\end{eqnarray}
The semi-basic $1$-form $\sigma = -4\lambda F^2 d_JF$ is $d_J$-closed and from 
formula \eqref{dispm} it follows that the spray $S$ is of
dissipative type. 

Within the class of sprays that we have discussed above, we will
consider now a concrete example in dimension $2$. We consider the spray $S\in {\mathfrak
  X}({\mathbb R}^2 \times {\mathbb R}^2)$, given by 
\begin{eqnarray}
S=y^1\frac{\partial}{\partial x^1} + y^2\frac{\partial}{\partial x^2}
- \left( (y^1)^2 + (y^2)^2 \right) \frac{\partial}{\partial y^1} -
  4y^1y^2 \frac{\partial}{\partial y^2}. \label{ex1}\end{eqnarray} 
In \cite[Example 7.2]{AT92}  it has been shown
that the system of second order ordinary differential equations
corresponding to this spray is not variational. Since it is a $2$-dimensional spray, it follows that $S$
is projectively metrizable. According to Corollary \ref{cor:2dim}, it
follows that the sprays $S$ is of dissipative type. We will show here
directly, that the spray $S$ is of
dissipative type by using the characterization $(D_1)$ of Theorem \ref{thm3}. We will prove that there exists a semi-basic $1$-form
$\theta=\theta_{1}(x,y)dx^1 + \theta_2(x,y)dx^2$ that satisfies the
two conditions $(D_1)$, which can be written as follows
\begin{eqnarray}
\frac{\partial \theta_1}{\partial y^2}= \frac{\partial \theta_2}{\partial y^1},
\quad \frac{\delta \theta_1}{\delta x^2}= \frac{\delta
  \theta_2}{\delta x^1}. \label{d1ex1}
\end{eqnarray}
For the given spray \eqref{ex1}, the coefficients of the canonical nonlinear
connection are given by
\begin{eqnarray*}
N^1_1=y^1, \ N^1_2= y^2, \ N^2_1=2y^2, \ N^2_2=2y^1. \end{eqnarray*}
Using first condition \eqref{d1ex1}, the second condition
\eqref{d1ex1} can be written as follows
\begin{eqnarray}
\frac{\partial \theta_1}{\partial x^2} - \frac{\partial
  \theta_2}{\partial x^1} = y^2\left( \frac{\partial
    \theta_1}{\partial y^1} - 2\frac{\partial \theta_2}{\partial
    y^2}\right) + y^1\frac{\partial \theta_1}{\partial
    y^2}. \label{gsolex1}\end{eqnarray}
 It is easy to see that the PDE system \eqref{gsolex1} has solutions. For
 example $ \theta_1(x^1, y^1)dx^1 + \theta_2(x^2, y^2)dx^2 $ is a solution to the system
\eqref{gsolex1} if and only if $\partial \theta_1/\partial
y^1=2\partial \theta_2/\partial y^2$. A Riemannian solution to
this system is provided by $\partial \theta_1/\partial
y^1=2\partial \theta_2/\partial y^2=2$. Such a semi-basic
$1$-form $\theta$, which is also homogeneous of order $1$, is given by
$\theta=2y^1dx^1 + y^2dx^2$. Using formula \eqref{list}, from the proof
of Theorem \ref{thm5}, it follows that the corresponding Lagrangian
can be obtained as
$L=i_S\theta$,
\begin{eqnarray}
L=\frac{1}{2}\left(2(y^1)^2+
  (y^2)^2\right) \label{lex1}.  \end{eqnarray}
We know that system \eqref{ex1} is dissipative since we found a
semi-basic $1$-form $\theta$ that satisfies the conditions $(D_1)$ of
Theorem \ref{thm4}. Using the calculations we did in the proof of
Theorem \ref{thm4} (first implication), we obtain that the dissipative
function is given by 
\begin{eqnarray} 
{\mathcal D}=-\frac{2}{3}(y^1)^3 - 2y^1(y^2)^2. \label{dex1} \end{eqnarray}
One can also check directly that for the spray $S$, given by formula
\eqref{ex1}, the Lagrangian $L$, given by formula \eqref{lex1}, and the dissipative function
${\mathcal D}$, given by formula \eqref{dex1}, we have
$\delta_SL=d_J{\mathcal D}$, which means that $S$ is of dissipative type.
   
\subsection{A class of gyroscopic semisprays} \label{section_ex2}

Historically, gyroscopic systems are systems of second order ordinary
differential equations in ${\mathbb R}^n$, of the form
\begin{eqnarray}
\frac{d^2x^i}{dt^2} = A^i_j \frac{dx^j}{dt} + B^i_jx^j, \label{classicgyr}\end{eqnarray}
where $A^i_j$ is a constant, skew symmetric matrix and $B^i_j$ is a constant, symmetric matrix.

We will consider now a generalization of the above system, on some
open domain $\Omega\subset {\mathbb R}^n$,
\begin{eqnarray}
\frac{d^2x^i}{dt^2} + 2N^i_j(x) \frac{dx^j}{dt} + V^i(x)=0. \label{exgyr}\end{eqnarray}
Consider $g_{ij}$ a scalar product in ${\mathbb R}^n$. Using the
conditions $(G_1)$ of the Theorem \ref{thm4}, we will
prove that the system \eqref{exgyr} is of gyroscopic type, with the
Lagrangian function $L(x,y)=g_{ij}y^iy^j/2$, if and only if the
functions $N^i_j(x)$ and $V^i(x)$ satisfy the following conditions
\begin{eqnarray}
g_{ik}N^k_j+ g_{jk}N^k_i=0, \quad g_{ik}\frac{\partial V^k}{\partial
  x^j} - g_{jk}\frac{\partial V^k}{\partial x^i} =
0. \label{gnv} \end{eqnarray}
In the particular case when $g_{ij}=\delta_{ij}$ is the Euclidean
metric, the functions $N^i_j$ are constant and $V^i(x)$ are linear functions, the
conditions \eqref{gnv}  assure that \eqref{classicgyr} is a gyroscopic
system.

We will use the local version $(LG_1)$ of the set of conditions $(G_1)$
of Theorem \ref{thm4} to test when the system \eqref{exgyr} is of gyroscopic type. 
For $L=g_{ij}y^iy^j/2$, its Poincar\'e-Cartan
$1$-form is $\theta=d_JL=g_{ij}y^jdx^i$. Therefore, we want to find the
necessary and sufficient conditions for the existence of a basic
$2$-form $\omega\in \Lambda^2(\Omega)$ that satisfies the
conditions $(G_1)$ of Theorem \ref{thm4}. As we have seen in the proof
of Theorem \ref{thm4}, the basic $2$-form $\omega$ is necessarily
given by $d_h\theta=\omega$. Locally, the
components $\omega_{ij}$ of the $2$-form $\omega$ are given by 
\begin{eqnarray}
\omega_{ij}=N_i^kg_{kj}- N_j^kg_{ki}. \label{f1o} \end{eqnarray}
We pay attention now to the second condition $(LG_1)$, which reads
$\nabla g_{ij}=0$ and implies 
\begin{eqnarray}
N_i^kg_{kj} + N_j^kg_{ki}=0, \label{f2o} \end{eqnarray}
which is first condition \eqref{gnv}. From the two formulae
\eqref{f1o} and \eqref{f2o} it follows 
\begin{eqnarray}
N^i_j(x)=\frac{1}{2}g^{ik}\omega_{jk}(x). \label{f3o}
\end{eqnarray}
We use formula \eqref{rij} to compute the local components $R^i_j$ of
the Jacobi endomorphism,
\begin{eqnarray*}
R^k_j = 2\frac{\partial N^k_l}{\partial x^j} y^l + \frac{\partial
  V^k}{\partial x^j} - \frac{\partial N^k_j}{\partial x^l}y^l -
N^k_lN^l_j. \end{eqnarray*}
From this formula, and using formula \eqref{f3o}, it follows 
\begin{eqnarray}
g_{ik}R^k_j-g_{kj}R^k_i = \left(\frac{\partial \omega_{ij}}{\partial
    x^l} + \frac{\partial \omega_{li}}{\partial x^j} + \frac{\partial \omega_{jl}}{\partial
    x^i} \right) y^l + g_{ik} \frac{\partial V^k}{\partial x^j} -
g_{jk} \frac{\partial V^k}{\partial x^i}. \label{f4o}   \end{eqnarray} 
Therefore, last condition $(LG_1)$ is satisfied if and only if the
second condition \eqref{gnv} is satisfied. It follows that the system
\eqref{exgyr} is gyroscopic if and only if the two conditions \eqref{gnv} are satisfied.

\begin{acknowledgement*}
The authors acknowledge fruitful discussions with Tom Mestdag and
Willy Sarlet. This work has been supported by the Bilateral Cooperation Program Ro-Hu 672/2013-2014.
\end{acknowledgement*}

\end{document}